\documentclass[10pt]{article}%
\usepackage{amsmath}
\usepackage{amsfonts}
\usepackage{mathrsfs}
\usepackage[utf8]{inputenc}
\usepackage{amssymb, color}
\usepackage[linkcolor=black,anchorcolor=black,citecolor=black]{hyperref}
\usepackage{graphicx}
\numberwithin{equation}{section}
\usepackage[body={15.5cm,21cm}, top=3cm]{geometry}%
\setcounter{MaxMatrixCols}{30}
\providecommand{\U}[1]{\protect\rule{.1in}{.1in}}
\providecommand{\U}[1]{\protect \rule{.1in}{.1in}}
\newtheorem{theorem}{Theorem}[section]

\newtheorem{corollary}[theorem]{Corollary}

\newtheorem{lemma}[theorem]{Lemma}

\newtheorem{proposition}[theorem]{Proposition}
\newtheorem{remark}[theorem]{Remark}

\newtheorem{assumption}[theorem]{Assumption}
\newenvironment{proof}[1][Proof]{\noindent \textbf{#1.} }{\  \rule{0.5em}{0.5em}}

\DeclareMathOperator*{\supp}{supp}

\begin{document}
	\title{Optimal Consumption for Recursive Preferences with Local Substitution -- the Case of Certainty}
	\author{ Hanwu Li\thanks{Research Center for Mathematics and Interdisciplinary Sciences, Shandong University, lihanwu@sdu.edu.cn.}
	\thanks{Funded by the Deutsche Forschungsgemeinschaft (DFG, German Research Foundation) – SFB 1283/2 2021 – 317210226 and  the Frontiers Science Center for Nonlinear Expectations (Ministry of Education), Shandong University, Qingdao 266237, Shandong.}
\and Frank Riedel \thanks{Center for Mathematical Economics, Bielefeld University,		frank.riedel@uni-bielefeld.de.}
\and Shuzhen Yang \thanks{Zhongtai Securities Institute for Financial Studies, Shandong University, yangsz@sdu.edu.cn.}}

			\date{}
		\maketitle
		
		\begin{abstract}
			We characterize optimal consumption policies in a recursive intertemporal utility framework with local  substitution.  We establish  existence and uniqueness and  a version of the Kuhn-Tucker theorem characterizing the optimal consumption plan. An explicit solution is provided for the case when the felicity function is of the Epstein-Zin's type.
			\vskip1em
			
		\textbf{Key words}: Recursive utility, Hindy-Huang-Kreps preference, Intertemporal Substitution, Utility maximization

\textbf{MSC classification}:  93E20, 91B42, 60H30, 65C30.

\textbf{JEL classification}: C61, D11, D81, G11.	
		\end{abstract}

	\section{Introduction}

Intertemporal choices  are important and ubiquitous. They form the basis of most dynamic models in economics and finance, and thus the shape of intertemporal preferences is crucial in understanding a host of economic situations ranging from microeconomic choices involving saving,  health, the environment, or the growth of nations.

	The standard workhorse 	for economic and financial intertemporal decisions has long been the time-additive discounted utility model introduced by Paul Samuelson in 1937 (see \cite{PS37}).  The time-additive discounted utility model is restrictive in many senses. It assumes, among other things, that the marginal rate of substitution between consumption today and consumption in a week does not depend on the consumption in between, to give an example. Time-additive utility also implies that the current utility does not depend on future utility as it is  assumed that the overall utility is a discounted sum of period utilities that depend only on current consumption. We refer to \cite{FLO} for a critical discussion   of the time-additive model's weaknesses.

To overcome these restrictions, recursive models of utility have been introduced and widely studied.
In  discrete-time, recursive utility was developed by Kreps and Porteus \cite{KP}, Epstein and Zin \cite{EZ}, Weil \cite{Weil}, and others, making it possible to disentangle risk aversion from the elasticity of intertemporal substitution. Stochastic differential utility was introduced by Epstein \cite{E87} in a deterministic setting and by Duffie and Epstein \cite{DE} in a stochastic setting as a continuous-time version of recursive utility. Epstein \cite{E87}, Duffie and Epstein \cite{DE} and the subsequent literature define stochastic differential utility axiomatically in continuous time, but do not establish a rigorous connection to discrete-time recursive utility. Heuristic links to recursive utility are provided in Duffie and Epstein \cite{DE}, Svensson \cite{Svensson} and Skiadas \cite{Skiadas}. The optimal consumption and investment problems under recursive utility have been widely studied. We may refer to Schroder and Skiadas \cite{SS99}, \cite{SS03}, Kraft, Seifried and Steffensen \cite{KSS13}, Seiferling and Serfried \cite{KSS17} and Matoussi and Xing \cite{MX}, among others.

These models are still based on the current rate of consumption as the basic ingredient of preferences.  Hindy, Huang and Kreps \cite{HHK} argue convincingly that models based on the rate of consumption do not capture desirable properties of local substitution. In particular, such preferences are not continuous with respect to economically reasonable topologies, as the weak topology. They thus propose to model the basic ingredient of  intertemporal consumption  by  a level of satisfaction, a weighted average of past consumption. Bank and Riedel \cite{BR00} provided an analog of the Kuhn-Tucker theorem which yields the first-order conditions for optimality. Based on this theorem, they obtained the solution in closed form for the case of finite time horizon. When there are stocks can be traded in the market (or in a stochastic setting), the optimal investment-consumption problem for agent whose preference is given by the Hindy-Huang-Kreps type has been studied by \cite{Alvarez,BR,BKR1,BKR2,BKR3,FLR}.

In this paper, we combine  the two approaches by considering a recursive intertemporal utility based on a level of satisfaction. We establish the basic properties of the utility functional, including continuity in the weak topology. We show that the associated intertemporal choice problem has a unique solution.
Generalizing the approach of \cite{BR00}, we establish necessary and sufficient Kuhn-Tucker-type conditions for optimality. Using this theorem, the construction of optimal consumption plan is given. More precisely, under some additional assumptions on the felicity function (see Assumption \ref{a5.1} (ii)), the optimal consumption starts at some time $t_0$ and ends at some time $t_1$. Besides, $t_0$, $t_1$ and optimal consumption $C^*$ satisfy a fully-coupled forward-backward system. Although the existence of solutions to the general forward-backward system is not obtained, we  solve this problem explicitly when the felicity function is of the Epstein-Zin's type, which coincides with the solution obtained in \cite{BR00} as predicted in \cite{SS99}. 

The paper is organized as follows. In Section 2, we formulate the utility maximization problem in details and provide the existence and uniqueness result. In Section 3, we demonstrate the Kuhn-Tucker-like necessary and sufficient conditions for optimality, which leads to the construction for optimal consumption plan in Section 4. Finally, when the felicity function is given by the Epstein-Zin's type, we solve the utility maximization problem explicitly.

\section{Problem formulation and existence and uniqueness result}

Consider an agent living from time $0$ to some time $T>0$ with initial wealth $w\geq 0$. The agent chooses a cumulative consumption plan {$C$, a finite $\sigma$-additive measure on the time interval $[0,T]$ in the set of distribution functions}
\begin{displaymath}
\mathcal{X}:=\{C:[0,T]\rightarrow \mathbb{R}_+: C \, \textrm{nondecreasing and right-continuous}\}.
\end{displaymath}
We assume that the consumption good is traded at some  continuous and strictly positive price $\psi: [0,T] \to \mathbb R_{++}$. The price functional $\Psi(C)$ is defined by
\begin{displaymath}
	\Psi(C):=\int_0^T \psi_t dC_t.
\end{displaymath}
Here and in the remainder of the paper, the integral with respect to $dC$ is over the closed interval. A positive value $C_0>0$ at time $0$ indicates an initial consumption gulp (we define $C_{0-}:=0$), corresponding to a point mass at zero. The budget  set is given by
\begin{displaymath}
	\mathcal{A}(w):=\{C\in\mathcal{X} \textrm{ such that }\Psi(C)\leq w\}
\end{displaymath}
for a strictly positive initial wealth $w>0$.
 Following \cite{HHK}, the current level of satisfaction $Y^C_t$  is given by
\begin{displaymath}
Y_t^C=\eta_t+\int_0^t \theta_{t,s}dC_s,
\end{displaymath}
where $\eta:[0,T]\rightarrow\mathbb{R}_+$ and $\theta:[0,T]^2\rightarrow\mathbb{R}_+$ are continuous functions. The quantity $\theta_{t,s}$ can be seen as the weight assigned at time $t$ to consumption made at time $s\leq t$ and $\eta_t$ describes an exogenously level of satisfaction for time $t$. The agent's recursive utility type is of the form
\begin{equation}\label{3}
U_t^C=\int_t^T f(s,Y_s^C,U_s^C)ds,
\end{equation}
where $f:[0,T]\times\mathbb{R}_+\times\mathbb{R}\rightarrow\mathbb{R}$ is the {intertemporal aggregator}. The  agent   maximizes the utility over all budget feasible consumptions, i.e., we consider the problem
\begin{equation}\label{1}
v(w)=\sup_{C\in\mathcal{A}(w)}U_0^C.
\end{equation}

We make the following standard assumptions on the {intertemporal aggregator} function $f$.
\begin{assumption}\label{a1}
	\begin{description}
		\item[(i)] For each $s\in[0,T]$, $f(s,\cdot,\cdot)$ is strictly concave and continuously differentiable and for any $s\in[0,T]$,  $u\in\mathbb{R}$, $f(s,\cdot,u)$ is strictly increasing;
		\item[(ii)] For any $(s,y)\in[0,T]\times\mathbb{R}_+$, $u,u'\in\mathbb{R}$, there exists a constant $K>0$ such that
		\[|f(s,y,u)-f(s,y,u')|\leq K|u-u'|;\]
		\item[(iii)] For any $s\in[0,T]$, there exist two constants $\alpha\in(0,1)$ and $K>0$ such that
		\[|f(s,y,0)|\leq K(1+|y|^\alpha).\]
	\end{description}
\end{assumption}


The assumption ensures that the recursive utility functional is well-defined.
We also present some basic properties of the recursive utility obtained by Equation \eqref{3} in the next proposition.

\begin{proposition}\label{p1}
	If the intertemporal aggregator function $f$ satisfies Assumption \ref{a1}, then { for each $C\in \mathcal{A}(w)$, there exists a unique solution $U^C$ to Equation \eqref{3}. Besides, the recursive utility  has the following properties.}
	\begin{description}
		\item[(1)]  We have
		\begin{equation}\label{utility}
			|U^C_s-U^{C'}_s|\leq \int_s^T e^{Kt}|f(t,Y_t^C,U_t^C)-f(t,Y_t^{C'},U_t^C)|dt, \ s\in[0,T]
		\end{equation}
		for some constant $K>0$. The utility functional  $U:\mathcal{X}\rightarrow \mathbb{R}$
		is continuous.
		\item[(2)] The utility functional is strictly concave.
	\end{description}
\end{proposition}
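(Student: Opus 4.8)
The plan is to handle the four claims (existence and uniqueness, the estimate \eqref{utility}, continuity, and strict concavity) in turn, reading the recursive equation \eqref{3} throughout as a backward integral equation with terminal value $U_T^C=0$.

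For existence and uniqueness I would fix $C\in\mathcal A(w)$ and first record that the satisfaction process is bounded: since $\psi$ is continuous and strictly positive on the compact $[0,T]$ it is bounded below by some $\delta>0$, whence $C_T\le w/\delta$, and since $\theta$ is continuous on $[0,T]^2$ it is bounded, so $Y^C$ is a bounded measurable function of $t$. Define the operator $(\Phi U)_t=\int_t^T f(s,Y_s^C,U_s)\,ds$ on the space of bounded measurable functions on $[0,T]$. Assumption \ref{a1}(ii)--(iii) make $s\mapsto f(s,Y_s^C,U_s)$ integrable, so $\Phi$ is well defined and $\Phi U$ is even continuous. The Lipschitz bound in \ref{a1}(ii) gives $|(\Phi U)_t-(\Phi U')_t|\le K\int_t^T|U_s-U'_s|\,ds$, and iterating yields $\|\Phi^n U-\Phi^n U'\|_\infty\le \tfrac{(KT)^n}{n!}\|U-U'\|_\infty$, so a power of $\Phi$ is a contraction and the Banach fixed point theorem supplies a unique solution $U^C$.

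For the estimate \eqref{utility} I set $\Delta_s:=U_s^C-U_s^{C'}$. Subtracting the two copies of \eqref{3} and inserting $\pm f(s,Y_s^{C'},U_s^C)$, I split the difference of aggregators into a frozen-$Y$ term $h_s:=f(s,Y_s^C,U_s^C)-f(s,Y_s^{C'},U_s^C)$ plus $f(s,Y_s^{C'},U_s^C)-f(s,Y_s^{C'},U_s^{C'})$, which, since $f$ is $C^1$ and $K$-Lipschitz in $u$, equals $k_s\Delta_s$ with $|k_s|\le K$. Differentiating in $s$ turns \eqref{3} into the linear ODE $\Delta_s'+k_s\Delta_s=-h_s$ with $\Delta_T=0$; solving with the integrating factor $\exp(\int_0^s k_r\,dr)$ gives $\Delta_s=\int_s^T\exp\bigl(\int_s^t k_r\,dr\bigr)h_t\,dt$, and bounding $\int_s^t k_r\,dr\le K(t-s)\le Kt$ yields exactly \eqref{utility}. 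Continuity then follows almost for free: if $C^n\to C$ in the weak topology then $Y_t^{C^n}=\eta_t+\int_0^t\theta_{t,s}\,dC_s^n\to Y_t^C$ for a.e. $t$ (the continuity points of $C$) because $\theta_{t,\cdot}$ is continuous, and applying \eqref{utility} with $s=0$ bounds $|U_0^{C^n}-U_0^C|$ by $\int_0^T e^{Kt}|f(t,Y_t^{C^n},U_t^{C^n})-f(t,Y_t^C,U_t^{C^n})|\,dt$, whose integrand tends to $0$ a.e. by continuity of $f$ and is dominated, since $Y^{C^n}$ and $U^{C^n}$ are uniformly bounded along the sequence; dominated convergence then gives $U_0^{C^n}\to U_0^C$.

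Finally, for strict concavity I would exploit that $C\mapsto Y^C$ is affine: for $C^\lambda=\lambda C^1+(1-\lambda)C^0$ one has $Y^{C^\lambda}=\lambda Y^{C^1}+(1-\lambda)Y^{C^0}$. Writing $V_s:=\lambda U_s^{C^1}+(1-\lambda)U_s^{C^0}$ and applying the strict concavity of $f$ from \ref{a1}(i), one gets $V_s=\int_s^T f(t,Y_t^{C^\lambda},V_t)\,dt-\int_s^T\delta_t\,dt$ with a nonnegative concavity gap $\delta_t$, so $V$ is a subsolution of the equation solved by $U^{C^\lambda}$. The same linear-ODE comparison as above, applied to $D_s:=U_s^{C^\lambda}-V_s$, gives $D_s=\int_s^T\exp\bigl(\int_s^t\tilde k_r\,dr\bigr)\delta_t\,dt\ge 0$, i.e. concavity. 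The main obstacle is the strict inequality: it needs $\delta_t>0$ on a set of positive measure, which holds once $(Y^{C^0},U^{C^0})$ and $(Y^{C^1},U^{C^1})$ differ on such a set. I would reduce this to the one genuinely structural point, namely that distinct consumption measures induce distinct satisfaction processes, using the strict positivity and continuity of the weighting kernel $\theta$; granting that, the comparison inequality is strict at $s=0$ and strict concavity of $C\mapsto U_0^C$ follows.
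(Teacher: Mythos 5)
Your proposal is correct and follows essentially the same route as the paper: a Lipschitz fixed-point/ODE argument for existence and uniqueness, a linearization of the $u$-difference combined with a Gronwall-type bound for \eqref{utility} (your integrating-factor solution of $\Delta_s'+k_s\Delta_s=-h_s$ is exactly the backward Gronwall inequality the paper invokes), the Portemanteau theorem plus uniform bounds and dominated convergence for weak continuity, and concavity of $f$ plus a comparison lemma for part (2); your only genuine departure is that you derive concavity of $U$ directly from the comparison argument where the paper cites Proposition 5 of Duffie--Epstein. One caveat on the last step: you assert that injectivity of $C\mapsto Y^C$ follows from ``strict positivity and continuity of $\theta$,'' but $\theta$ is only assumed $\mathbb{R}_+$-valued here, and even strict positivity does not obviously force injectivity; the paper does not claim this either --- it states strict concavity of $U$ as a function of the pair $(Y,U)$ (its proof of (2) silently needs $Y^1\neq Y^2$ on an interval) and imposes injectivity of $C\mapsto Y^C$ as a separate hypothesis in the uniqueness theorem, with the product-kernel condition $\theta_{t,s}=\theta^1_t\theta^2_s$ of Remark 2.4 as a verifiable sufficient condition. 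So you should either leave injectivity as an explicit hypothesis, as the paper does, or prove it for a concrete kernel, rather than deduce it from positivity of $\theta$ alone.
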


\begin{proof}[Proof of Proposition \ref{p1}]
By Assumption \ref{a1}, for fixed $C \in \mathcal{X}$, the ODE \eqref{3} admits a unique solution $U^C$  because the right side is continuous in time and uniformly Lipschitz-continuous in $U^C$.

For property (1),
for any $C,C'\in \mathcal{A}(w)$, set $\widetilde{U}_t=U^C_t-U^{C'}_t$. {By Assumption \ref{a1} (ii), for any $0\leq t\leq T$, we have}
	\begin{align*}
		|\widetilde{U}_t|\leq &\left|\int_t^T (f(s,Y^C_s,U_s^C)-f(s,Y^{C'}_s,U^C_s))ds\right|+ \left|\int_t^T (f(s,Y^{C'}_s,U_s^C)-f(s,Y^{C'}_s,U^{C'}_s))ds\right|\\
		\leq  &\int_t^T |(f(s,Y^C_s,U_s^C)-f(s,Y^{C'}_s,U^C_s)|ds+\int_t^T K|\widetilde{U}_s|ds.
	\end{align*}
By the Gronwall inequality in backward form\footnote{Recall the Gronwall inequality in backward form (see \cite{DE}). Suppose that for any $t\in[0,T]$,
$
h_t\leq \int_t^T (g_s+\alpha h_s)ds+h_T,
$
where $\alpha$ is a constant and $g$ is a given integrable function. Then we have
$
h_t\leq e^{\alpha(T-t)}h_T+\int_t^T e^{\alpha(s-t)}g_s ds.
$.}, we obtain \eqref{utility}.

We next turn to continuity.
We need to prove that if the sequence $\{C^n\}_{n\in\mathbb{N}}\subset \mathcal{A}(w)$ converges weakly to some $C\in \mathcal{A}(w)$, then for any $t\in[0,T]$, we have
	\begin{equation}\label{converge}
			\lim_{n\rightarrow \infty} U^{C^n}_t=U^C_t.
	\end{equation}
 Since the topology of weak convergence of measures on $([0,T],\mathcal{B}([0,T]))$ is metrizable, it suffices to show Equation \eqref{converge}   for a weakly convergent sequence $\{C^n\}_{n\in\mathbb{N}}\subset\mathcal{A}(w)$ that  converges monotonically   to $C\in\mathcal{A}(w)$.

For each fixed $t\in[0,T]$ with $\Delta C_t:= C_t-C_{t-}=0$, the function $s\rightarrow \theta_{t,s} I_{[0,t]}(s)$ is continuous in $dC$-a.e. $s\in[0,T]$. Applying the Portemanteau theorem, we have $Y^{C^n}_t\rightarrow Y^C_t$ for each $t$ with $\Delta C_t:= C_t-C_{t-}=0$. This convergence holds true for $dt$-a.e. Besides, by weak convergence we have $C^n_T\rightarrow C_T$, which yields the uniform boundedness of $C^n_t$  ($t\in[0,T]$, $n\in\mathbb{N}$). Since $\eta$ and $\theta$ are continuous functions, there exists a constant $M>0$, such that for any $t\in[0,T]$
\begin{equation}\label{Y1}
	Y_t^C\leq M(1+C_T). 
\end{equation}
By Assumption \ref{a1} (ii) and (iii), it is easy to check that
\begin{align*}
	|U^C_t|=&|\int_t^T f(s,Y^C_s,U^C_s)ds|\leq \int_t^T |f(s,Y^C_s,0)|ds+K\int_t^T |U_s|ds\\
	\leq &\int_t^T K(1+|Y_s^C|^\alpha) ds+K\int_t^T |U_s|ds\\
	\leq &\int_t^T K_\alpha(1+|Y_s^C|) ds+K\int_t^T |U_s|ds,
\end{align*}
where $K_\alpha$ is a constant depending on $K,\alpha$. By the Gronwall inequality and Equation \eqref{Y1},  there exists a constant $M>0$, such that for any $t\in[0,T]$
\begin{equation}\label{U1}
	 |U_t^C|\leq M(1+C_T).
\end{equation}
Then, we obtain the uniform boundedness of $Y^{C^n}_t$ and $U^{C^n}_t$ ($t\in[0,T]$, $n\in\mathbb{N}$) by \eqref{Y1} and \eqref{U1}, respectively. By the dominated convergence theorem, we have $U^{C^n}_t\rightarrow U^C_t$ as $n$ goes to infinity, $t\in[0,T]$.

(2) For any $C^i\in \mathcal{A}(w)$, $i=1,2$, let $Y^i$, $U^i$ be the level of satisfaction and utility associated with $C^i$, $i=1,2$, respectively. Then, for any $\lambda \in(0,1)$, set $C^\lambda:=\lambda C^1+(1-\lambda)C^2\in \mathcal{A}(w)$. By Proposition 5 in \cite{DE},  we have for any $t\in[0,T]$, $U^{C^\lambda}_t\geq \lambda U^1_t+(1-\lambda)U^2_t$. Suppose that there exists an interval $(t_0,t_1)$, such that $C^1_s\neq C^2_s$, $s\in(t_0,t_1)$. We then show that $U^{C^\lambda}_0>\lambda U^1_0+(1-\lambda)U^2_0$. For simplicity, we define $\delta_t=U^{C^\lambda}_t-( \lambda U^1_t+(1-\lambda)U^2_t)$. Then, we have
\begin{align*}
\delta_t=&\int_t^T[ f(s,Y^{C^\lambda}_s,U^{C^\lambda}_s)-\lambda f(s,Y_s^1,U_s^1)-(1-\lambda)f(s,Y_s^2,U_s^2)]ds\\
=&\int_t^T [ f(s,Y^{C^\lambda}_s,U^{C^\lambda}_s)-f(s,\lambda Y^1_s+(1-\lambda)Y^2_s, \lambda U^1_s+(1-\lambda)U^2_s)+g_s]ds,
\end{align*}
where $g_s=f(s,\lambda Y^1_s+(1-\lambda)Y^2_s, \lambda U^1_s+(1-\lambda)U^2_s)-\lambda f(s,Y_s^1,U_s^1)-(1-\lambda)f(s,Y_s^2,U_s^2)$. Due to strict concavity, we obtain that $g_s>0$ on some interval $(t'_0,t'_1)$. Since $Y^C$ is linear in $C$, it is easy to check that
\begin{displaymath}
f(s,Y^{C^\lambda}_s,U^{C^\lambda}_s)-f(s,\lambda Y^1_s+(1-\lambda)Y^2_s, \lambda U^1_s+(1-\lambda)U^2_s)\geq -K|\delta_s|.
\end{displaymath}
Applying Lemma \ref{simple}, we then  obtain the strict concavity property.
\end{proof}

Now, we are in a position to state the main result in this section.
		
\begin{theorem}
	Under Assumption \ref{a1}, the utility maximization problem has a solution $C^*$. The solution is unique if $C\rightarrow  Y^C$ is injective.
\end{theorem}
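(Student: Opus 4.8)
The plan is to use the direct method of the calculus of variations, leveraging the two structural facts established in Proposition \ref{p1}: that $U:\mathcal{X}\to\mathbb{R}$ is weakly continuous and strictly concave. For existence, I would start from a maximizing sequence $\{C^n\}\subset\mathcal{A}(w)$ with $U_0^{C^n}\to v(w)$. The decisive step is to show that $\mathcal{A}(w)$ is sequentially compact in the weak topology. Since $\psi$ is continuous and strictly positive on the compact interval $[0,T]$, it is bounded away from zero, say $\psi_t\geq c>0$; hence $c\,C_T^n\leq \Psi(C^n)\leq w$, which gives the uniform bound $C_T^n\leq w/c$ on the total mass of the measures $C^n$. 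By Helly's selection theorem (equivalently, the weak-$*$ compactness of uniformly bounded nonnegative measures on the compact space $[0,T]$ via Prokhorov's/Banach--Alaoglu's theorem), some subsequence, still denoted $\{C^n\}$, converges weakly to a nonnegative measure whose right-continuous distribution function $C^*$ belongs to $\mathcal{X}$.

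It then remains to check that $C^*\in\mathcal{A}(w)$ and that it is optimal. Because $\psi\in C([0,T])$ and the interval is compact, the price functional $C\mapsto\Psi(C)=\int_0^T\psi_t\,dC_t$ is weakly continuous (testing against the continuous function $\psi$ passes to the weak limit, and similarly testing against the constant function $1$ shows $C^n_T\to C^*_T$, so no mass escapes at the endpoints). Consequently $\Psi(C^*)=\lim_n\Psi(C^n)\leq w$, so $C^*\in\mathcal{A}(w)$. Finally, the weak continuity of $U$ from Proposition \ref{p1}(1) yields $U_0^{C^*}=\lim_n U_0^{C^n}=v(w)$, so $C^*$ solves the maximization problem.

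For uniqueness I would argue by contradiction using convexity of the budget set together with strict concavity. Suppose $C^1\neq C^2$ are both optimal and that $C\mapsto Y^C$ is injective. Then $Y^{C^1}\neq Y^{C^2}$, and since both are right-continuous in $t$, they differ on some nondegenerate interval. As $\mathcal{A}(w)$ is convex, $C^\lambda:=\lambda C^1+(1-\lambda)C^2\in\mathcal{A}(w)$ for $\lambda\in(0,1)$. The strict concavity established in Proposition \ref{p1}(2)---whose proof turns precisely on the satisfaction levels differing on an interval, which forces the aggregator correction term $g_s$ to be strictly positive there---then gives $U_0^{C^\lambda}>\lambda U_0^{C^1}+(1-\lambda)U_0^{C^2}=v(w)$, contradicting optimality. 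Hence the optimizer is unique.

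The main obstacle is the compactness step in the existence proof: one must ensure that the weak limit of the maximizing sequence genuinely remains in the budget set, i.e. that no mass escapes to the boundary points $0$ and $T$ and that the constraint $\Psi\leq w$ is preserved in the limit. Both are secured by the continuity of $\psi$ and the compactness of $[0,T]$. The injectivity hypothesis on $C\mapsto Y^C$ in the uniqueness claim is indispensable for exactly the reason just seen: two distinct consumption plans inducing identical satisfaction levels would produce identical utilities, so strict concavity in $C$ can be asserted only modulo the kernel of this map.
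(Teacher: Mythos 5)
Your proposal is correct and follows essentially the same route as the paper: existence via weak compactness of $\mathcal{A}(w)$ (which the paper delegates to Proposition 3.2 of \cite{BR00} and you prove directly through the uniform mass bound and Helly/Prokhorov) combined with the weak continuity of $U$ from Proposition \ref{p1}, and uniqueness by contradiction from strict concavity, with injectivity of $C\mapsto Y^C$ ensuring the satisfaction levels of two distinct optimizers differ on a nondegenerate interval. No gaps; your added detail on why no mass escapes and why $\Psi$ passes to the weak limit only makes explicit what the cited compactness result already contains.
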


\begin{proof}
	Existence follows from Proposition \ref{p1} and the fact that $\mathcal{A}(w)$ is compact with respect to the weak topology (see Proposition 3.2 in \cite{BR00}). Uniqueness is  due to the strict concavity of the utility functional. In fact, suppose that $C^i$, $i=1,2$ are two different optimal consumption plans. Let $Y^i$, $U^i$ be the level of satisfaction and utility associated with $C^i$, $i=1,2$, respectively. Then, there exists an open interval on which $Y^1\neq Y^2$. It is clear that for any $\lambda \in(0,1)$, $C^\lambda:=\lambda C^1+(1-\lambda)C^2\in \mathcal{A}(w)$ and
	\begin{align*}
	U^{C^\lambda}_0>\lambda U^1_0+(1-\lambda)U^2_0=\sup_{C\in\mathcal{A}(w)}U_0^C,
	\end{align*}
	which leads to a contradiction.
\end{proof}

\begin{remark}
	Suppose that there exist two continuous, positive functions $\theta^i$ defined on $[0,T]$, $i=1,2$, such that $\theta_{t,s}=\theta^1_t\theta^2_s$. Then, the mapping $C\rightarrow  Y^C$ is injective.  A frequently used example   is given by $\theta_{t,s}=\beta e^{-\beta(t-s)}$, $\beta>0$.
\end{remark}

\section{First order conditions for optimality}

In this section, we provide the analogue of the Kuhn-Tucker theorem for the utility maximization problem \eqref{1}. In order to formulate our result, we set
\begin{displaymath}
	\nabla V(C)(t)=\int_t^T \exp\left(\int_0^s \partial_u f(r,Y_r^C,U_r^C)dr\right)\partial_y f(s,Y_s^C,U_s^C)\theta_{s,t}ds.
\end{displaymath}
 In fact, $\nabla V(C)$ can be interpreted as the marginal utility, which is derived from an additional infinitesimal consumption at time $t$, otherwise following the consumption $C$. From the mathematical point of view, $\nabla V(C)$ is the  utility gradient at $C$.

 \begin{remark}
 Recall the optimal consumption problem studied in \cite{SS99}. The Riesz representation of the utility gradient at time $t$ is of the following form
 \begin{displaymath}
	m_t(c)= \exp\left(\int_0^t \partial_u f(r,c_s,V_s(c))dr\right)\partial_y f(s,c_t,V_t(c)),
\end{displaymath}
where $c=\{c_t\}_{t\in[0,T]}$ is the consumption rate and $V_\cdot(c)$ is the recursive utility associated with $c$. Compared with the above result, the gradient at time $t$ in our case depends on the whole path of the consumption, which leads to the main difficulty in the Hindy-Huang-Kreps framework.
 \end{remark}

\begin{theorem}\label{T2}
	Under Assumption \ref{a1}, a consumption plan $C^*\in\mathcal{X}$ solves the utility maximization problem \eqref{1} if and only if the following conditions hold true for some finite Lagrange multiplier $M>0$:
	\begin{description}
		\item[(i)] $\int_0^T \psi_t dC^*_t=w$;
		\item[(ii)] $\nabla V(C^*)(t)\leq M\psi_t$ for any $t\in[0,T]$;
		\item[(iii)] $\int_0^T (\nabla V(C^*)(t)-M\psi_t)dC^*_t=0$.
	\end{description}
\end{theorem}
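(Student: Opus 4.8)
The plan is to reduce both implications to a single variational computation: the one-sided Gateaux derivative of the utility functional along a feasible direction. Concretely, I would first establish the gradient identity
\[
\frac{d}{d\epsilon}\Big|_{\epsilon=0^+}U_0^{C^*+\epsilon(C-C^*)}=\int_0^T\nabla V(C^*)(t)\,d(C-C^*)_t
\]
for all $C,C^*\in\mathcal{A}(w)$. To obtain it, observe that for $C^\epsilon:=C^*+\epsilon(C-C^*)$ the satisfaction level is affine in $\epsilon$, namely $Y^{C^\epsilon}_s=Y^{C^*}_s+\epsilon h_s$ with $h_s=\int_0^s\theta_{s,u}\,d(C-C^*)_u$. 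Since $f$ is $C^1$ and Lipschitz in its last argument (Assumption \ref{a1}), the solution of the recursive ODE \eqref{3} depends differentiably on $\epsilon$, and $D_s:=\partial_\epsilon U^{C^\epsilon}_s|_{\epsilon=0}$ solves the linear backward ODE $\dot D_s=-\partial_y f(s,Y^{C^*}_s,U^{C^*}_s)\,h_s-\partial_u f(s,Y^{C^*}_s,U^{C^*}_s)\,D_s$ with $D_T=0$. Solving explicitly gives $D_0=\int_0^T\exp\left(\int_0^s\partial_u f\,dr\right)\partial_y f(s)\,h_s\,ds$; substituting $h_s$ and interchanging the order of integration (Fubini, valid because $C-C^*$ is a finite signed measure and the integrand is continuous and bounded) reproduces $\nabla V(C^*)$ and hence the displayed identity.

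\textbf{Sufficiency.} Assume (i)--(iii). By Proposition \ref{p1}(2) the functional $C\mapsto U_0^C$ is concave, so its Gateaux derivative is a supergradient, giving $U_0^C-U_0^{C^*}\le\int_0^T\nabla V(C^*)(t)\,d(C-C^*)_t$. I then split the right-hand side: by (iii) together with (i), $\int_0^T\nabla V(C^*)\,dC^*=M\int_0^T\psi\,dC^*=Mw$, while (ii) with $\Psi(C)\le w$ yields $\int_0^T\nabla V(C^*)\,dC\le M\int_0^T\psi\,dC\le Mw$. Hence $U_0^C-U_0^{C^*}\le Mw-Mw=0$ for every $C\in\mathcal{A}(w)$, so $C^*$ is optimal.

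\textbf{Necessity.} Assume $C^*$ is optimal. For (i), if $\Psi(C^*)<w$ I compare $C^*$ with $(1+\delta)C^*$ for small $\delta$ (or, if $C^*=0$, with a small point mass placed where the marginal utility is positive): this raises $Y_s$ on a set of positive measure, and by strict monotonicity of $f$ in $y$ and ODE comparison it strictly raises $U_0$, a contradiction; thus the budget binds. Convexity of $\mathcal{A}(w)$ and optimality then give, via the gradient identity along $\epsilon\mapsto C^*+\epsilon(C-C^*)$, the variational inequality $\int_0^T\nabla V(C^*)(t)\,d(C-C^*)_t\le0$ for all $C\in\mathcal{A}(w)$. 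To extract the pointwise bound I set $M:=\tfrac1w\int_0^T\nabla V(C^*)(t)\,dC^*_t$ and test this inequality, for each $t^*\in[0,T]$, against the feasible competitor $C_\epsilon=(1-\epsilon/w)C^*+\epsilon\,\psi_{t^*}^{-1}\mathbf{1}_{[t^*,T]}$, which lies in $\mathcal{A}(w)$ with $\Psi(C_\epsilon)=w$. Dividing by $\epsilon>0$ yields $\psi_{t^*}^{-1}\nabla V(C^*)(t^*)-\tfrac1w\int_0^T\nabla V(C^*)\,dC^*\le0$, i.e. $\nabla V(C^*)(t^*)\le M\psi_{t^*}$, which is (ii). Condition (iii) is then immediate from (i), since $\int_0^T M\psi\,dC^*=Mw=\int_0^T\nabla V(C^*)\,dC^*$. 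Finally $M>0$, because $\nabla V(C^*)\ge0$ and, by $\partial_y f>0$ and the positivity of $\theta$, $\nabla V(C^*)\not\equiv0$, so $M=0$ would contradict (ii).

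\textbf{Main obstacle.} The delicate part is the rigorous justification of the gradient identity, namely differentiating the solution of the recursive backward ODE with respect to the perturbation parameter and controlling the remainder uniformly. Here I would rely on the a priori bounds developed in the proof of Proposition \ref{p1} (the estimate \eqref{U1} and the backward Gronwall argument) to dominate the difference quotients and pass to the limit, and on the continuity of $t\mapsto\nabla V(C^*)(t)$ to give meaning to testing the variational inequality against the point mass at $t^*$. Once the identity is in place, the supergradient inequality and the complementary-slackness algebra are routine.
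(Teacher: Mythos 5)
Your proposal is correct, but it routes around the paper's argument in two places. For the key inequality, the paper never computes the exact Gateaux derivative: in both directions it uses only the concavity of $f$ to get a one-sided integral inequality for $U^{*}-U$ (resp.\ $\tfrac1\varepsilon(U^\varepsilon-U^{*})$) and then closes it with the backward Gronwall-type Lemma~\ref{gronwall}, so only one-sided estimates and continuity in $\varepsilon$ are ever needed; you instead establish the genuine gradient identity by differentiating the backward ODE in $\varepsilon$, which is cleaner conceptually but is exactly the step you correctly flag as needing the a priori bounds of Proposition~\ref{p1} to control the difference quotients. For necessity, the paper proves your variational inequality as Lemma~\ref{l4} (that $C^{*}$ also solves the linearized problem $\sup_{C\in\mathcal A(w)}\int_0^T\nabla V(C^{*})\,dC$) and then simply cites Lemma~4.4 of \cite{BR00} for the Kuhn--Tucker characterization of the linear problem; you reprove that cited lemma inline by testing against the competitors $(1+\delta)C^{*}$ and $(1-\epsilon/w)C^{*}+\epsilon\psi_{t^{*}}^{-1}\mathbf 1_{[t^{*},T]}$, which makes the argument self-contained and makes the multiplier $M=\tfrac1w\int_0^T\nabla V(C^{*})\,dC^{*}$ explicit. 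The only soft spot, shared with the paper, is the non-degeneracy needed for $M>0$ and for the budget to bind (one needs $\theta$ not to annihilate all feasible perturbations, as it does in the concrete specification of Assumption~\ref{a5.1}); aside from that your complementary-slackness algebra and the sufficiency computation coincide with the paper's $I^{*}$ versus $I$ decomposition.
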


\begin{proof}
	We first prove the sufficiency. Let $C^*\in\mathcal{X}$ be the consumption plan satisfying conditions (i)-(iii) and $C\in\mathcal{X}$ be another budget feasible consumption plan. For simplicity, set $Y^*=Y^{C^*}$, $Y=Y^C$, $U^*=U^{C^*}$ and $U=U^C$. By the concavity of the {intertemporal aggregator} function $f$ {and the flow property of recursive utility},  for any $0\leq t\leq r\leq T$,
	\begin{align*}
		(U^*_t-U_t)=& \int_t^r (f(s,Y_s^*,U_s^*)-f(s,Y_s,U_s))ds+(U^*_r-U_r)\\
		\geq &\int_t^r \partial_y f(s,Y_s^*,U_s^*)(Y_s^*-Y_s)ds+\int_t^r \partial_u f(s,Y_s^*,U_s^*)(U_s^*-U_s)ds+(U^*_r-U_r)\\
		=&\int_t^r \partial_y f(s,Y_s^*,U_s^*)\int_0^s \theta_{s,t}(dC_t^*-dC_t)ds+\int_t^r \partial_u f(s,Y_s^*,U_s^*)(U_s^*-U_s)ds+(U^*_r-U_r).
	\end{align*}
	By Lemma \ref{gronwall}, we obtain that
	\begin{displaymath}
		U_0^*-U_0\geq \int_0^T \exp\left(\int_0^s \partial_uf(r,Y^*_r,U^*_r)dr\right)\partial_yf(s,Y_s^*,U_s^*)\int_0^s \theta_{s,t}(dC_t^*-dC_t)ds.
	\end{displaymath}
	We divide the last expectation into two terms:
	\begin{displaymath}
		I^*=\int_0^T \exp\left(\int_0^s \partial_uf(r,Y^*_r,U^*_r)dr\right)\partial_yf(s,Y_s^*,U_s^*)\int_0^s \theta_{s,t}dC_t^*ds
	\end{displaymath}
	and
		\begin{displaymath}
	I=\int_0^T \exp\left(\int_0^s \partial_uf(r,Y^*_r,U^*_r)dr\right)\partial_yf(s,Y_s^*,U_s^*)\int_0^s \theta_{s,t}dC_tds.
	\end{displaymath}
	By simple calculation, we derive that
	\begin{align*}
		I^*&=\int_0^T \int_t^T \exp\left(\int_0^s \partial_uf(r,Y^*_r,U^*_r)dr\right)\partial_yf(s,Y_s^*,U_s^*)\theta_{s,t}ds dC_t^*\\
		&=\int_0^T \nabla V(C^*)(t)dC_t^*=M \int_0^T \psi_tdC_t^*=M w,
	\end{align*}
	where we use the Fubini theorem in the first equality. A similar analysis yields that
	\begin{displaymath}
		I=\int_0^T \nabla V(C^*)(t)dC_t\leq M \int_0^T \psi_tdC_t\leq M w.
	\end{displaymath}
	Combining the above results implies that $U^*_0-U_0\geq 0$. Hence, $C^*$ is optimal. Necessity follows from Lemma \ref{l4} below and Lemma 4.4 in \cite{BR00}.
\end{proof}

The following lemma indicates that if $C^*$ is optimal for the original problem, it also solves a suitable linear utility maximization problem, whose solution is characterized by Lemma 4.4 in \cite{BR00}.
\begin{lemma}\label{l4}
	Let $C^*$ be optimal for the original problem \eqref{1} and let $\phi^*=\nabla V(C^*)$.  Then, $C^*$ is also optimal for the following linear problem
	\begin{equation}\label{6}
		\sup_{C\in\mathcal{A}(w)} \int_0^T \phi^*_t dC_t.
	\end{equation}
	Furthermore, the value of this problem is finite.
\end{lemma}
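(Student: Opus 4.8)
The plan is to reduce optimality for the concave problem \eqref{1} to a one-sided first-order condition, which is precisely optimality for the linear problem \eqref{6}. Fix an arbitrary competitor $C\in\mathcal{A}(w)$. Since $\mathcal{A}(w)$ is convex, the perturbations $C^\lambda:=(1-\lambda)C^*+\lambda C=C^*+\lambda(C-C^*)$ lie in $\mathcal{A}(w)$ for every $\lambda\in(0,1]$. Write $Y^\lambda,U^\lambda$ for the level of satisfaction and the utility attached to $C^\lambda$. Because $C\mapsto Y^C$ is affine, $Y^\lambda_s=Y^*_s+\lambda\Delta Y_s$ with $\Delta Y_s:=\int_0^s\theta_{s,t}\,d(C-C^*)_t$, and the optimality of $C^*$ gives the elementary upper bound $U^\lambda_0-U^*_0\le 0$ for all $\lambda$.

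The core step is a matching lower bound of order $\lambda$. Applying the supergradient inequality for the concave aggregator $f$ \emph{at the base point} $(Y^\lambda,U^\lambda)$ (exactly as in the sufficiency part of Theorem \ref{T2}, but anchored at $C^\lambda$ rather than $C^*$) and invoking the backward Gronwall Lemma \ref{gronwall}, I would obtain
\begin{displaymath}
U^\lambda_0-U^*_0\ \ge\ \lambda\int_0^T\exp\!\left(\int_0^s\partial_uf(r,Y^\lambda_r,U^\lambda_r)\,dr\right)\partial_yf(s,Y^\lambda_s,U^\lambda_s)\,\Delta Y_s\,ds=:\lambda\,G(\lambda),
\end{displaymath}
and, after Fubini's theorem, $G(\lambda)=\int_0^T \nabla V(C^\lambda)(t)\,d(C-C^*)_t$. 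Dividing by $\lambda>0$ and combining with the upper bound yields $G(\lambda)\le \tfrac{1}{\lambda}(U^\lambda_0-U^*_0)\le 0$ for every $\lambda\in(0,1]$.

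It then remains to pass to the limit $\lambda\downarrow 0$ and identify $\lim_{\lambda\downarrow0}G(\lambda)=\int_0^T\phi^*_t\,d(C-C^*)_t$, which I expect to be the main obstacle. Here I would argue as follows: as $\lambda\downarrow0$ one has $C^\lambda\to C^*$ weakly, so Proposition \ref{p1}(1) gives $U^\lambda_s\to U^*_s$ for each $s$, while $Y^\lambda_s\to Y^*_s$ holds by construction. Continuity of $\partial_yf,\partial_uf$ (Assumption \ref{a1}(i)) then forces pointwise convergence of the integrand in $G(\lambda)$ to that in $G(0)=\int_0^T\phi^*_t\,d(C-C^*)_t$. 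For domination, note that all $C^\lambda$ stay in the weakly compact budget set $\mathcal{A}(w)$, so the a priori bounds \eqref{Y1} and \eqref{U1} hold uniformly in $\lambda$; since $\partial_uf$ is bounded by the Lipschitz constant $K$ of Assumption \ref{a1}(ii), $\partial_yf$ is bounded on the resulting compact range, $\theta$ is continuous, and $C-C^*$ is a finite signed measure, the integrands are uniformly bounded. Dominated convergence (with Fubini) then delivers $G(\lambda)\to\int_0^T\phi^*_t\,d(C-C^*)_t$, whence $\int_0^T\phi^*_t\,d(C-C^*)_t\le0$, i.e. $\int_0^T\phi^*_t\,dC_t\le\int_0^T\phi^*_t\,dC^*_t$. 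As $C\in\mathcal{A}(w)$ was arbitrary, $C^*$ solves \eqref{6}.

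Finiteness of the value is immediate from the same estimates: $\phi^*=\nabla V(C^*)$ is a bounded function, and because $\psi$ is continuous and strictly positive on $[0,T]$ we have $\psi\ge\psi_{\min}>0$, so every $C\in\mathcal{A}(w)$ satisfies $C_T\le w/\psi_{\min}$; hence $\int_0^T\phi^*_t\,dC_t\le\|\phi^*\|_\infty\,w/\psi_{\min}<\infty$ uniformly over the budget set. The delicate point throughout is the interchange of limits in the variational step, i.e. controlling $G(\lambda)$ as $\lambda\downarrow0$, for which the uniform bounds \eqref{Y1} and \eqref{U1} on the compact set $\mathcal{A}(w)$ are the crucial ingredient; everything else reuses the concavity-plus-Gronwall machinery already developed for Theorem \ref{T2}.
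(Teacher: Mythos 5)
Your argument is correct and follows essentially the same route as the paper: the same convex perturbation $C^\varepsilon=\varepsilon C+(1-\varepsilon)C^*$, the same supergradient inequality anchored at $(Y^\varepsilon,U^\varepsilon)$ combined with the backward Gronwall Lemma \ref{gronwall} and Fubini, and the same passage to the limit $\varepsilon\downarrow 0$ via continuity and dominated convergence. Your added detail on the uniform domination via \eqref{Y1}--\eqref{U1} and the explicit finiteness bound are consistent with (and slightly more explicit than) the paper's treatment.
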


\begin{proof}
	For any $C\in\mathcal{A}(w)$ and $\varepsilon\in[0,1]$, let $C^\varepsilon=\varepsilon C+(1-\varepsilon)C^*$. For simplicity, set $Y^*=Y^{C^*}$, $Y=Y^C$, $Y^\varepsilon=Y^{C^\varepsilon}$, $U^*=U^{C^*}$, $U=U^C$ and $U^\varepsilon=U^{C^\varepsilon}$. By the concavity of the felicity function, we have, for any $0\leq t<r\leq T$,
	\begin{align*}
	\frac{1}{\varepsilon}(U^\varepsilon_t-U_t^*)
	=& \frac{1}{\varepsilon}\int_t^r (f(s,Y_s^\varepsilon,U_s^\varepsilon)-f(s,Y_s^*,U_s^*))ds+\frac{1}{\varepsilon}(U^\varepsilon_r-U_r^*)\\
	\geq &\frac{1}{\varepsilon}\int_t^r \partial_y f(s,Y_s^\varepsilon,U_s^\varepsilon)(Y_s^\varepsilon-Y_s^*)ds+\int_t^r \partial_u f(s,Y_s^\varepsilon,U_s^\varepsilon)(U_s^\varepsilon-U_s^*)ds+\frac{1}{\varepsilon}(U^\varepsilon_r-U_r^*)\\
	=&\int_t^r \partial_y f(s,Y_s^\varepsilon,U_s^\varepsilon)\int_0^s \theta_{s,t}(dC_t-dC_t^*)ds\\
	 &+\int_t^r \partial_u f(s,Y_s^\varepsilon,U_s^\varepsilon)\frac{1}{\varepsilon}(U_s^\varepsilon-U_s^*)ds+\frac{1}{\varepsilon}(U^\varepsilon_r-U_r^*),
	\end{align*}
	where the last equality follows from the fact that
	\begin{displaymath}
		Y_s^\varepsilon-Y_s^*=\varepsilon (Y_s-Y_s^*)=\varepsilon \int_0^s \theta_{s,t}(dC_t-dC_t^*).
	\end{displaymath}
	By Lemma \ref{gronwall} and noting that $C^*$ is optimal for the utility maximization problem \eqref{1}, we obtain that
	\begin{displaymath}
		0\geq \frac{1}{\varepsilon}(U^\varepsilon_0-U^*_0)\geq \int_0^T \exp\left(\int_0^s \partial_uf(r,Y_r^\varepsilon,U_r^\varepsilon)dr\right)\partial_yf(s,Y_s^\varepsilon,U_s^\varepsilon)\int_0^s \theta_{s,t}(dC_t-dC^*_t)ds.
	\end{displaymath}
	Set $\Phi^\varepsilon(t)=\int_t^T \exp(\int_0^s \partial_uf(r,Y_r^\varepsilon,U_r^\varepsilon)dr)\partial_yf(s,Y_s^\varepsilon,U_s^\varepsilon)\theta_{s,t}ds$, $t\in[0,T]$. By the Fubini theorem, the above equation yields that, for any $\varepsilon\in(0,1]$,
	\begin{equation}\label{varepsilon}
		\int_0^T \Phi^\varepsilon(t)dC_t^*\geq \int_0^T \Phi^\varepsilon(t)dC_t.
	\end{equation}
	By the continuity of the utility function and the level of satisfaction w.r.t consumption, it follows that
	\begin{displaymath}
	   \Phi^*(t):=\Phi^0(t)=\lim_{\varepsilon\rightarrow 0}\Phi^\varepsilon(t), \ t\in[0,T].
	\end{displaymath}
	Letting $\varepsilon$ go to $0$ in Eq. \eqref{varepsilon}, by the dominated convergence theorem, we obtain that
    $$\int_0^T \Phi^*(t)dC_t^*\geq \int_0^T \Phi^*(t)dC_t,$$
    which is the desired result.
\end{proof}

By Theorem \ref{T2}, it is easy to obtain the following result.
   \begin{corollary}\label{cor4.5}
   Every solution $C^*\in\mathcal{A}(w)$ of the utility maximization problem \eqref{1} satisfies
   \begin{itemize}
   \item[(i)] $\int_0^T \psi_t dC^*_t=w$;
   \item[(ii)] $\supp dC^*\subset \arg\max \frac{\nabla V(C^*)}{\psi}.$
   \end{itemize}
   \end{corollary}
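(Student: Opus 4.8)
The plan is to read off both claims directly from the three Kuhn-Tucker conditions in Theorem \ref{T2}, so the corollary is essentially a repackaging of that theorem. Part (i) is nothing but condition (i) there, requiring no separate argument. All the content sits in part (ii), which I would extract by combining the dual feasibility condition (ii) with the complementary slackness condition (iii).

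First I would fix the finite Lagrange multiplier $M>0$ supplied by Theorem \ref{T2} and rewrite condition (ii) as $\frac{\nabla V(C^*)(t)}{\psi_t}\leq M$ for every $t\in[0,T]$, which is legitimate since $\psi$ is strictly positive. This exhibits $M$ as an upper bound for the function $\frac{\nabla V(C^*)}{\psi}$ on $[0,T]$. Next I would convert condition (iii) into an almost-everywhere statement: setting $g_t:=M\psi_t-\nabla V(C^*)(t)$, feasibility (ii) gives $g_t\geq 0$, while (iii) reads $\int_0^T g_t\,dC^*_t=0$. Because $dC^*$ is a nonnegative measure and the integrand is nonnegative, this forces $g_t=0$ for $dC^*$-a.e.\ $t$, i.e.\ $\frac{\nabla V(C^*)(t)}{\psi_t}=M$ holds $dC^*$-almost everywhere. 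In particular $M=\max_{t\in[0,T]}\frac{\nabla V(C^*)(t)}{\psi_t}$, so the set where the maximum is attained is exactly $\{t:g_t=0\}=\arg\max\frac{\nabla V(C^*)}{\psi}$.

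Finally I would upgrade this almost-everywhere statement to the support inclusion by invoking continuity. The gradient $\nabla V(C^*)$ is continuous in $t$, being an integral over $[t,T]$ of an integrand built from the continuous functions $\theta$, $\partial_y f$, $\partial_u f$ evaluated along the continuous paths $Y^{C^*}$, $U^{C^*}$, and $\psi$ is continuous and strictly positive; hence $g$ is continuous, $\{g=0\}$ is closed and $\{g>0\}$ is open. Any point $t_0$ with $g_{t_0}>0$ then admits an open neighborhood on which $g>0$, and that neighborhood has $dC^*$-measure zero, so $t_0\notin\supp dC^*$. Therefore $\supp dC^*\subset\{g=0\}=\arg\max\frac{\nabla V(C^*)}{\psi}$, which is precisely the claim.

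The argument is short once Theorem \ref{T2} is in hand, and I do not expect a substantial obstacle. The only step requiring genuine care is this last upgrade: passing from ``$g=0$ holds $dC^*$-a.e.'' to ``$\supp dC^*\subset\{g=0\}$'' relies on the continuity of $\nabla V(C^*)/\psi$ to guarantee that the argmax set is closed, and it is exactly that closedness which makes the support inclusion go through.
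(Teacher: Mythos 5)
Your argument is correct and is exactly the reasoning the paper leaves implicit when it states that the corollary follows ``by Theorem \ref{T2}'': part (i) is condition (i) of that theorem verbatim, and part (ii) comes from combining conditions (ii) and (iii) with the nonnegativity of $dC^*$ and the continuity of $\nabla V(C^*)/\psi$. Your care in upgrading the $dC^*$-a.e.\ equality to the support inclusion via openness of $\{M\psi-\nabla V(C^*)>0\}$ is the one step the paper glosses over, and it is handled properly.
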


\section{Construction of optimal consumption plans}

In the previous section, we have shown that the optimal consumption plan for problem \eqref{1} exists.  In this section, we are going to provide a more explicit method to construct the optimal consumption plan with the help of the first order conditions obtained in Theorem \ref{T2} and Corollary \ref{cor4.5}. To this end, we make the following assumptions for the price $\psi$, the functions $\eta,\theta$ and the felicity function $f$.

\begin{assumption}\label{a5.1}
(i) The price function $\psi$ is given by
 \begin{displaymath}
 \psi_t=e^{-r t}
 \end{displaymath}
for some constant interest rate $r\geq 0$. The level of satisfaction $Y^C$ is given by
\begin{displaymath}
Y^C_t=y e^{-\beta t}+\beta\int_0^t e^{-\beta(t-s)}dC_s
\end{displaymath}
for some $y,\beta>0$.

(ii) The felicity function $f$ and its first and second partial derivatives are continuous on $[0,T]\times(0,\infty)\times \mathbb{R}$. Furthermore, for any $(t,u)\in[0,T]\times\mathbb{R}$, we have
\begin{displaymath}
\lim_{y\downarrow 0}\partial_y f(t,y,u)=+\infty, \ \lim_{y\uparrow +\infty}\partial_y f(t,y, u)=0.
\end{displaymath}
Finally, for any $(t,y,u)\in[0,T]\times (0,\infty)\times\mathbb{R}$, we have $\mathfrak{L}f(t,y,u)>0$, where
\begin{align*}
\mathfrak{L}f(t,y,u)=&r \partial_y f(t,y,u)+\partial_u f(t,y,u)\partial_y f(t,y,u)+\partial_{ty} f(t,y,u)\\
&-\beta y \partial_y^2 f(t,y,u)-f(t,y,u)\partial_{uy}f(t,y,u).
\end{align*}
\end{assumption}

\begin{remark}\label{remarkf}
(i) 
Consider the time-additive case, where $f(t,y,u)=g(y)-\delta u$. The corresponding instantaneous utility function is given by $h(t,y)=e^{-\delta t} g(y)$. It is easy to check that $\mathfrak{L}f(t,y,u)>0$ is equivalent to $\mathcal{L}h(t,y)>0$, where $\mathcal{L}$ is the differential operator defined by $\mathcal{L}:=r\partial_y-\beta y \partial_y^2+\partial_{ty}$ in \cite{BR00}. That is to say, Assumption \ref{a5.1} (ii) coincides with Assumption 5.1 (ii) in \cite{BR00} in the time-additive case.

(ii) Consider the Epstein-Zin felicity function $f(t,y,u)=\frac{\delta}{1-\frac{1}{\alpha}}y^{1-\frac{1}{\alpha}} [(1-\rho) u]^{1-\frac{1}{\psi}}-\delta \psi u$ with $\delta\geq 0$, $0<\rho\neq 1$ and $0<\alpha\neq 1$, where
 $$\psi= \frac{1-\rho}{1-\frac{1}{\alpha}}.$$
  Here, $\rho$ represents the agent's relative risk aversion, $\alpha$ is his elasticity of intertemporal substitution and $\delta$ is his rate of time preference. Assumption \ref{a5.1} (ii) is equivalent to the requirement $r+\frac{1}{\alpha}\beta-\delta>0$.
\end{remark}

\subsection{Constructive method for optimal consumption plan}

In this subsection, we try to use the first order conditions in Theorem \ref{T2} to give an explicit description of the optimal consumption plan. The first problem is that one does not know enough about the support of optimal consumption plan. We will show that, in fact, Assumption \ref{a5.1} ensures that the support of optimal consumption plan is a closed interval (see Lemma \ref{lem5.6}). Therefore, for a fixed  Lagrange multiplier $M>0$, we guess that the level of satisfaction associated with optimal consumption plan $C^M$ is of the following form:

\begin{equation}\label{resulty}
{Y}^M_t=\begin{cases}
y e^{-\beta t}, &t\in[0,t_0(M));\\
y e^{-\beta t}+\int_{t_0(M)}^{t} \beta e^{-\beta(t-s)}dC^M_s, &t\in[t_0(M),t_1(M)];\\
(y e^{-\beta t_1(M)}+\int_{t_0(M)}^{t_1(M)} \beta e^{-\beta(t_1(M)-s)}dC^M_s) e^{-\beta(t-t_1(M))}, &t\in(t_1(M),T],
\end{cases}
\end{equation}
where the agent refrains from consumption outside the interval  $(t_0(M)$, $t_1(M))$. We set $I_t^M=y e^{-\beta t}+\int_{t_0(M)}^{t} \beta e^{-\beta(t-s)}dC^M_s$. Let $U^M$ be the utility corresponding to $Y^M$, i.e.,
\begin{equation}\label{UM}
U^M_t=\int_t^T f(s,Y^M_s,U^M_s)ds.
\end{equation}
 We define
\begin{equation}\label{PhiM}
\Phi^M_t=\beta e^{(r+\beta)t}\int_t^T \exp\left(\int_0^s \partial_u f(r,Y^M_r,U^M_r)dr\right) \partial_y f(s,Y^M_s,U^M_s)e^{-\beta s}ds.
\end{equation}
Based on the first order conditions obtained in Theorem \ref{T2}, $t_0(M),t_1(M), C^M$ should satisfy the following condition:
\begin{equation}\label{verification}
\begin{cases}
\Phi^M_t< M, \ t\in[0,t_0(M))\cup(t_1(M),T];\\
\Phi^M_t= M, \ t\in[t_0(M),t_1(M)].
\end{cases}
\end{equation}

\begin{theorem}\label{construction}
Under Assumption \ref{a5.1}, suppose that for each fixed $M>0$, there exists a unique triple $(t_0(M),t_1(M),C^M)$ satisfying \eqref{verification}. Then, we have
\begin{equation}\label{CM}
d C^M_t=-\frac{\mathfrak{L}f(t,Y^M_t,U^M_t)}{\beta \partial_y^2 f(t,Y^M_t,U^M_t)}I_{(t_0(M),t_1(M))}(t) dt.
\end{equation}
 The solution of the optimal consumption problem \eqref{1} is $C^{M^*}$ with $M^*$ the solution to the equation $\int_0^T e^{-rt}dC^{M^*}_t=w$.
\end{theorem}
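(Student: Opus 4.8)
The plan is to recognize that the verification system \eqref{verification} is nothing but the first order conditions of Theorem \ref{T2} rewritten for the special data of Assumption \ref{a5.1}, and then to extract the density \eqref{CM} by differentiating the binding condition $\Phi^M_t=M$ twice along the consumption interval. First I would observe that, since $\psi_t=e^{-rt}$ and $\theta_{s,t}=\beta e^{-\beta(s-t)}$, the quantity defined in \eqref{PhiM} satisfies $\Phi^M_t=\nabla V(C^M)(t)/\psi_t$. Consequently \eqref{verification} says exactly that $\nabla V(C^M)(t)\le M\psi_t$ on $[0,T]$ with equality on $[t_0(M),t_1(M)]$; since the guessed plan $C^M$ charges only this interval, conditions (ii) and (iii) of Theorem \ref{T2} hold automatically for the pair $(C^M,M)$.

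The core of the argument, and the step I expect to be the main obstacle, is to pass from the scalar identity $\Phi^M_t=M$ on $(t_0(M),t_1(M))$ to the pointwise ODE \eqref{CM}. Writing $\Phi^M_t=\beta e^{(r+\beta)t}\int_t^T e^{\Lambda_s}\partial_y f(s,Y^M_s,U^M_s)e^{-\beta s}\,ds$ with $\Lambda_s=\int_0^s\partial_u f(r,Y^M_r,U^M_r)\,dr$, a first differentiation of the constant $\Phi^M_t$ yields
\[
 e^{\Lambda_t}\,\partial_y f(t,Y^M_t,U^M_t)=\frac{M(r+\beta)}{\beta}\,e^{-rt},\qquad t\in(t_0(M),t_1(M)).
\]
Before differentiating a second time I must upgrade the regularity of $C^M$: here $U^M$ is $C^1$ because $\dot U^M_t=-f(t,Y^M_t,U^M_t)$ is continuous, and $\Lambda$ is $C^1$ with $\dot\Lambda_t=\partial_u f(t,Y^M_t,U^M_t)$, so the right-hand side above is $C^1$ in $t$. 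The Inada conditions and strict concavity in Assumption \ref{a5.1}(ii) make $y\mapsto\partial_y f(t,y,u)$ a strictly decreasing bijection of $(0,\infty)$ onto itself, so the displayed identity determines $Y^M_t$ implicitly; since $f\in C^2$ with $\partial_y^2 f\ne 0$, the implicit function theorem shows $t\mapsto Y^M_t$ is $C^1$ on the open interval. Then $\dot Y^M_t=\beta\big(\tfrac{dC^M_t}{dt}-Y^M_t\big)$ forces $C^M$ to be absolutely continuous there with continuous density $c_t:=dC^M_t/dt=Y^M_t+\dot Y^M_t/\beta$.

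With this regularity in hand I would take the logarithmic derivative of the displayed identity. Using $\dot Y^M_t=\beta(c_t-Y^M_t)$ and $\dot U^M_t=-f(t,Y^M_t,U^M_t)$ and collecting terms, the left-hand side expands to $\partial_u f+\big(\partial_{ty}f+\beta(c_t-Y^M_t)\partial_y^2 f-f\,\partial_{uy}f\big)/\partial_y f$ while the right-hand side differentiates to $-r$; multiplying through by $\partial_y f$ and comparing with the definition of $\mathfrak{L}f$ collapses everything to $\beta\,\partial_y^2 f\,c_t=-\mathfrak{L}f$, which is precisely \eqref{CM} (and, since $\mathfrak{L}f>0$ and $\partial_y^2 f<0$, confirms $c_t>0$). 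Finally, to finish the optimality claim I would fix $M^*$ so that condition (i) of Theorem \ref{T2} holds, i.e. $\int_0^T e^{-rt}\,dC^{M^*}_t=w$; existence of such an $M^*$ follows from the continuity and monotonicity of $M\mapsto\int_0^T e^{-rt}\,dC^M_t$ together with the intermediate value theorem. Since $(C^{M^*},M^*)$ then satisfies (i)--(iii), the sufficiency part of Theorem \ref{T2} identifies $C^{M^*}$ as the optimal consumption plan, which is moreover unique because $C\mapsto Y^C$ is injective for the exponential kernel $\theta_{t,s}=\beta e^{-\beta(t-s)}$.
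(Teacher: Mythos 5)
Your proposal is correct and follows essentially the same route as the paper: differentiate the binding condition $\Phi^M_t=M$ twice on $(t_0(M),t_1(M))$, identify the resulting expression with $\mathfrak{L}f$, and recover $dC^M$ from $dC^M_t=\frac{1}{\beta}dY^M_t+Y^M_t\,dt$. Your additional care with the regularity of $Y^M$ (via the implicit function theorem) and your explicit verification of conditions (i)--(iii) of Theorem \ref{T2} are details the paper's proof leaves implicit, but the substance of the argument is the same.
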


\begin{proof}
Since $(t_0(M),t_1(M),C^M)$ is a solution to \eqref{verification}, then for any $t\in(t_0(M),t_1(M))$, we have
\begin{displaymath}
\beta e^{(r+\beta)t}\int_t^T \exp\left(\int_0^s \partial_u f(r,Y^M_r,U^M_r)dr\right) \partial_y f(s,Y^M_s,U^M_s)e^{-\beta s}ds=M.
\end{displaymath}
Taking derivative w.r.t. $t$ yields that
\begin{displaymath}\label{firstorder}
(r+\beta)M-\beta e^{rt}\exp\left(\int_0^t \partial_u f(r,Y^M_r,U^M_r)dr\right) \partial_y f(t,Y^M_t,U^M_t)=0.
\end{displaymath}
Taking derivative w.r.t $t$ again, we have
\begin{displaymath}\begin{split}
0=&r(r+\beta)M+(r+\beta)M\partial_u f(t,Y^M_t,U^M_t)+(r+\beta)M\frac{\partial_{ty} f(t,Y^M_t,U^M_t)}{\partial_y f(t,Y^M_t,U^M_t)}\\
&+(r+\beta)M\frac{\partial_y^2 f(t,Y^M_t,U^M_t)}{\partial_y f(t,Y^M_t,U^M_t)} \frac{dY^M_t}{dt}-(r+\beta)M\frac{ f(t,Y^M_t,U^M_t)\partial_{uy} f(t,Y^M_t,U^M_t)}{\partial_y f(t,Y^M_t,U^M_t)}.
\end{split}\end{displaymath}
Simple calculation implies that
\begin{align}\label{YM}
dY_t^M=-\frac{\mathfrak{L}(t,Y^M_t,U^M_t)+\beta Y^M_t\partial_y^2 f(t,Y^M_t,U^M_t)} {\partial_y^2 f(t,Y^M_t,U^M_t)}dt. 
\end{align}
By the dynamic of $Y^{C}$ in Assumption \ref{a5.1}, we know that
\begin{displaymath}
dC^M_t=\frac{1}{\beta}d Y^M_t+Y^M_t dt.
\end{displaymath}
Plugging $dY^M_t$ obtained by \eqref{YM} into the above equation yields the desired result.
\end{proof}

\begin{remark}
Suppose that for each $M>0$, there exists a unique triple $(t_0(M),t_1(M),C^M)$ satisfying \eqref{verification}. Then, the consumption $C^M$ occurs in rates.  Besides, for the time-additive case with $f(t,y,u)=g(y)-\delta u$, Equation \eqref{CM} degenerates to Equation (11) in \cite{BR00}.
\end{remark}

Finding $t_0(M),t_1(M)$ and $C^M$  requires solving a fully coupled forward-backward system \eqref{resulty}-\eqref{verification}. We solve this problem for some typical cases explicitly in the next section.

\subsection{Explicit solutions for the Epstein-Zin felicity function}

In this subsection, we provide the explicit solution to the optimal consumption problem \eqref{1} when the felicity function is of the Epstein-Zin type as in Remark \ref{remarkf}. More precisely, $f$ takes the following form
 \begin{equation}\label{epsteinzin}
 f(t,y,u)=\frac{\delta}{1-\frac{1}{\alpha}}y^{1-\frac{1}{\alpha}} [(1-\rho) u]^{1-\frac{1}{\psi}}-\delta \psi u,
 \end{equation}
  where
 $$\psi= \frac{1-\rho}{1-\frac{1}{\alpha}}$$
and $\delta, \rho,\alpha$ are positive constants  satisfying $0<\rho\neq1$, $\rho<\frac{1}{\alpha}\neq1$ and $r+\frac{1}{\alpha}\beta-\delta>0$.

 \begin{remark}\label{assumptionf}
 Assumption \ref{a5.1} (ii) is satisfied when we have $0<\rho\neq1$, $\rho<\frac{1}{\alpha}\neq1$ and $r+\frac{1}{\alpha}\beta-\delta>0$. Besides, if $\rho\alpha=1$,  our utility function $U$ coincides with the power felicity function (12) in \cite{BR00},  with $\alpha$ replaced by $1-1/\alpha$.
 \end{remark}

Recalling the previous subsection, we need to solve the following equation for $t\in[t_0(M),t_1(M)]$,
\begin{displaymath}
\int_t^T \exp\left(\int_0^s \partial_u f(r,Y^M_r,U^M_r)dr\right) \partial_y f(s,Y^M_s,U^M_s) \beta e^{\beta(t-s)} ds=Me^{-rt},
\end{displaymath}
where  $t_0(M)$ and  $t_1(M)$ are the times when consumption starts resp. stops, respectively. 
Taking derivatives with respect to $t$, we obtain that
\begin{equation}\label{firstorder}
(r+\beta)M-\beta e^{rt}\exp\left(\int_0^t \partial_u f(r,Y^M_r,U^M_r)dr\right) \partial_y f(t,Y^M_t,U^M_t)=0.
\end{equation}
Simple calculations imply
\begin{align*}
&\partial_y f(t,Y^M_t,U^M_t)=\delta e^{\delta(\psi-1)t}(Y^M_t)^{-\frac{1}{\alpha}}\left(\int_t^T \delta e^{-\delta s}(Y_s^M)^{1-\frac{1}{\alpha}} dr\right)^{\psi-1},\\
&\partial_u f(t,Y^M_t,U^M_t)=(\psi-1)\frac{\delta e^{-\delta t}(Y_s^M)^{1-\frac{1}{\alpha}} }{\int_t^T \delta e^{-\delta s} (Y_s^M)^{1-\frac{1}{\alpha}}  ds}-\delta \psi.
\end{align*}
Hence,
\begin{align*}
\int_0^t \partial_u f(r,Y^M_r,U^M_r)dr=(\psi-1)\left(\ln\left(\int_0^T \delta e^{-\delta s} (Y^M_s)^{1-\frac{1}{\alpha}} ds\right)-\ln\left(\int_t^T \delta e^{-\delta s}(Y_s^M)^{1-\frac{1}{\alpha}}  ds\right)\right)-\delta \psi t.
\end{align*}
Plugging the above results into Eq. \eqref{firstorder}, we obtain that
\begin{displaymath}
(r+\beta)M-\beta e^{(r-\delta)t}\delta\left(\int_0^T\delta e^{-\delta s}(Y_s^M)^{1-\frac{1}{\alpha}}  ds\right)^{\psi-1}(Y_t^M)^{-\frac{1}{\alpha}}=0.
\end{displaymath}
Set $A_M=\delta(\int_0^T\delta e^{-\delta s}(Y_s^M)^{1-\frac{1}{\alpha}} ds)^{\psi-1}$. Then, we derive that for any $t\in[t_0(M),t_1(M)]$
\begin{displaymath}
Y_t^M=A_M^{\alpha} \left(\frac{M(r+\beta)}{\beta}\right)^{-\alpha} e^{-\alpha(\delta- r)t}.
\end{displaymath}
Finally, we have
\begin{equation}\label{result y}
{Y}^M_t=\begin{cases}
y e^{-\beta t}, &t\in[0,t_0(M));\\
A_M^{\alpha} \left(\frac{M(r+\beta)}{\beta}\right)^{-\alpha} e^{-\alpha(\delta- r)t}, &t\in[t_0(M),t_1(M)];\\
A_M^{\alpha} \left(\frac{M(r+\beta)}{\beta}\right)^{-\alpha} e^{-\alpha(\delta- r)t_1(M)}e^{-\beta(t-t_1(M))}, &t\in(t_1(M),T].
\end{cases}
\end{equation}

The remaining problem is to calculate $t_0(M),t_1(M)$ and $A_M$ (or $Y^M$, equivalently) explicitly. Actually, $Y^M,t_0(M),t_1(M)$ should satisfy \eqref{verification} with
$$A_M=\delta\left(\int_0^T\delta e^{-\delta s}(Y^M_s)^{1-\frac{1}{\alpha}} ds\right)^{\psi-1}.$$
Finally, we need to find the appropriate $M$ such that $\int_0^T e^{-rt}dC_t^M=\int_0^T\frac{1}{\beta} e^{-rt}(dY^M_t+\beta Y^M_t dt)=w$, where $C^M$ is the consumption plan resulting the level of satisfaction $Y^M$.

\begin{theorem}\label{final}
Suppose that $\bar{\tau}>0$, where
\begin{equation}\label{bartau}
\bar{\tau}=\begin{cases}
T-\frac{1}{\delta+\beta(1-\frac{1}{\alpha})}\ln\frac{r+\beta}{r+\frac{\beta}{\alpha}-\delta}, & \delta+\beta(1-\frac{1}{\alpha})\neq 0,\\
T-\frac{1}{r+\beta}, &\textrm{otherwise}.
\end{cases}
\end{equation}
Let
\begin{displaymath}
k^*=\begin{cases}
\frac{r+\frac{\beta}{\alpha}-\delta}{\beta(\delta-(1-\frac{1}{\alpha})r)}(1-e^{-[\alpha\delta+(1-\alpha)r]\bar{\tau}}), & \delta\neq (1-\frac{1}{\alpha})r,\\
(1-\frac{\alpha(\delta-r)}{\beta})\bar{\tau}, & \textrm{otherwise}.
\end{cases}
\end{displaymath}

If $w\geq k^* y$, it is optimal to have an initial consumption gulp of size $\Delta C_0=(w-k^* y)/(1+\beta k^*)$ and then for $t\in(0,\bar{\tau}]$, to consume at rates
\begin{equation}\label{optimalc1}
dC_t=\left(1-\frac{\alpha(\delta-r)}{\beta}\right)\frac{y+\beta w}{1+\beta k^*}e^{-\alpha(\delta-r)t}dt.
\end{equation}

If $w<k^* y$, the investor optimally waits until time
\begin{displaymath}
\underline{\tau}=\frac{1}{r+\frac{\beta}{\alpha}-\delta}\ln \left (M^* \frac{r+\beta}{\beta} y^{\frac{1}{\alpha}}\right)
\end{displaymath}
then he starts to consume at rates
\begin{equation}\label{optimalc2}
dC_t=\frac{\beta+\alpha(r-\delta)}{\beta}\left(M^* \frac{r+\beta}{\beta}\right)^{-\alpha}e^{-\alpha(\delta-r)t}dt
\end{equation}
until time $\bar{\tau}$.
Here, $M^*=(\beta K^*)/(r+\beta)$, where $K^*>0$ is the unique solution to
\begin{displaymath}
K^{-\frac{r+\beta}{r+\frac{\beta}{\alpha}-\delta}} y^{-\frac{\alpha \delta-(\alpha-1)r}{\beta+\alpha(r-\delta)}}-K^{-\alpha} e^{-[\alpha\delta-(\alpha-1)r]\bar{\tau}}=\frac{\beta[\alpha \delta-(\alpha-1)r]}{\beta+\alpha(r-\delta)}w,
\end{displaymath}
if $\delta\neq (1-\frac{1}{\alpha})r$, or to
\begin{displaymath}
K^-\alpha\left( \frac{1}{\beta}\ln K+\frac{1}{\alpha\beta}\ln y-(r+\frac{\beta}{\alpha}-\delta)\bar{\tau} \right)=-\alpha w,
\end{displaymath}
if $\delta= (1-\frac{1}{\alpha})r$.
\end{theorem}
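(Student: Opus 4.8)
The plan is to verify that the three-piece profile \eqref{result y}, together with the stopping times \eqref{bartau} and the consumption rates \eqref{optimalc1}--\eqref{optimalc2}, satisfies the characterization \eqref{verification} of Theorem \ref{construction}; optimality then follows from Theorem \ref{T2}. The first step is to exploit the homogeneity of the Epstein--Zin aggregator to collapse the gradient. Inserting the explicit formulas for $\partial_y f$ and $\exp(\int_0^s\partial_u f\,dr)$ computed just above, the integrand of $\Phi^M$ reduces to $\delta J_0^{\psi-1}e^{-\delta s}(Y^M_s)^{-1/\alpha}$ with $J_0=\int_0^T\delta e^{-\delta s}(Y^M_s)^{1-1/\alpha}ds$, so that $\Phi^M_t=\beta A_M e^{(r+\beta)t}\int_t^T e^{-(\delta+\beta)s}(Y^M_s)^{-1/\alpha}ds$ with $A_M=\delta J_0^{\psi-1}$. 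Since $Y^M_s\propto A_M^\alpha$ on the consumption region, the factor $A_M$ cancels in $\Phi^M$; I would record this by introducing the rescaled multiplier $M^*=M/A_M$, in terms of which $Y^M_t=(\tfrac{(r+\beta)M^*}{\beta})^{-\alpha}e^{-\alpha(\delta-r)t}$ on $[t_0,t_1]$ and the rate is $(1-\tfrac{\alpha(\delta-r)}{\beta})Y^M_t\,dt$, nonnegative precisely because $r+\tfrac\beta\alpha-\delta>0$.

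Second, I would pin down $t_1=\bar\tau$. On $(t_1,T]$ the profile decays at rate $\beta$, so $(Y^M_s)^{-1/\alpha}\propto e^{(\beta/\alpha)s}$ and the integral defining $\Phi^M$ becomes elementary with exponent $\nu:=\delta+\beta(1-\tfrac1\alpha)$. Eliminating the prefactor via the first-order relation and demanding that the constant value of $\Phi^M$ on $[t_0,t_1]$ agree by continuity with its value at $t_1$ computed from the decay branch yields the $M$-free equation $(r+\beta)\tfrac{1-e^{-\nu(T-t_1)}}{\nu}=1$, whose solution is exactly \eqref{bartau}, with the degenerate subcase $\nu=0$ giving $t_1=T-\tfrac1{r+\beta}$. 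A parallel computation at the left endpoint shows that the decayed-endowment branch $y e^{-\beta t}$ meets the consumption branch $C^1$-smoothly at $t_0$, so $t_0$ is fixed by the matching identity $y e^{-\beta t_0}=(\tfrac{(r+\beta)M^*}{\beta})^{-\alpha}e^{-\alpha(\delta-r)t_0}$, that is $t_0=\tfrac1{r+\beta/\alpha-\delta}\ln(\tfrac{(r+\beta)M^*}{\beta}y^{1/\alpha})=\underline\tau$.

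Third, I would split on the sign of this candidate $t_0$, which corresponds to the threshold $w=k^*y$. If $w\ge k^*y$ the unconstrained branch would force $t_0<0$, so the constraint $t_0\ge0$ binds: I set $t_0=0$ and absorb the excess into an initial gulp, whose size follows from $Y_0=y+\beta\Delta C_0=\tfrac{y+\beta w}{1+\beta k^*}$, giving $\Delta C_0=(w-k^*y)/(1+\beta k^*)\ge0$; the budget identity $\Delta C_0+k^*Y_0=w$ then holds by construction and yields \eqref{optimalc1}, while $k^*$ is recognized as the discounted cost of the unit-$Y_0$ consumption stream, reproducing the stated formula. If $w<k^*y$ the waiting region $[0,\underline\tau)$ is genuine; here I would impose $\int_0^T e^{-rt}\,dC^M_t=w$, which after substituting \eqref{optimalc2} and $\underline\tau(M^*)$ becomes a single transcendental equation in $K^*=(r+\beta)M^*/\beta$, namely the displayed equation for $K^*$, and I would prove existence and uniqueness of a positive root by a monotonicity argument on the difference of the two power terms, with a separate treatment of the limiting case $\delta=(1-\tfrac1\alpha)r$.

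The routine part is the collection of exponential integrals; the genuine obstacles are twofold. First, the conditions \eqref{verification} demand the strict inequality $\Phi^M_t<M$ on all of $[0,t_0)\cup(t_1,T]$, not merely near the endpoints, so I must establish global monotonicity of $\Phi^M$ off the consumption interval rather than only the local endpoint conditions $\tfrac{d^2}{dt^2}\Phi^M|_{t_0^-}<0$ and $\tfrac{d^2}{dt^2}\Phi^M|_{t_1^+}<0$, which fall out immediately from the sign of $r+\tfrac\beta\alpha-\delta$. Second, the unique solvability of the budget equation for $K^*$ is delicate because the two exponents $\tfrac{r+\beta}{r+\beta/\alpha-\delta}$ and $\alpha$ need not be ordered a priori, so the monotonicity of the left-hand side must be argued from the sign restrictions $0<\rho\neq1$, $\rho<\tfrac1\alpha\neq1$, $r+\tfrac\beta\alpha-\delta>0$ together with the hypothesis $\bar\tau>0$; the degenerate cases $\nu=0$ and $\delta=(1-\tfrac1\alpha)r$ must finally be checked to match the stated limiting formulas.
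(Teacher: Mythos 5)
Your proposal follows essentially the same route as the paper's proof: reduce to the verification conditions of Theorem \ref{construction}, use the Epstein--Zin homogeneity to compute $\Phi^M$ explicitly, pin down $\bar\tau$ and $\underline\tau$ from the constancy/matching conditions (your $M$-free equation $(r+\beta)\frac{1-e^{-\nu(T-t_1)}}{\nu}=1$ is exactly what the paper's ``simple calculations'' encode), establish $\Phi'_t<0$ past $\bar\tau$ and $\Phi'_t>0$ before $\underline\tau$, and fix the multiplier from the budget constraint. If anything, you are more explicit than the paper about the two points it leaves as ``we may check'' --- the global (not merely local) monotonicity of $\Phi$ off the consumption interval and the unique solvability of the transcendental equation for $K^*$ --- so no change of method is involved.
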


\begin{proof}
Recalling \eqref{PhiM} and noting that
\begin{displaymath}
U_t=\frac{1}{1-\rho}\left(\int_t^T  \delta e^{\delta (t-s)}Y_s^{1-\frac{1}{\alpha}} ds\right)^{\psi},
\end{displaymath}
we may obtain
\begin{displaymath}
\Phi_t=\left(\int_0^T\delta e^{-\delta s}Y^{1-\frac{1}{\alpha}} _s ds\right)^{\psi-1}e^{(r+\beta)t}\int_t^T \delta \beta e^{-(\beta+\delta)s} Y_s^{-\frac{1}{\alpha}} ds.
\end{displaymath}
By Theorem \ref{construction}, it is sufficient to prove $\int_0^T e^{-rt}dC_t=w$ and
\begin{equation}\label{verification1}
\begin{cases}
\Phi_t< M_1, \ t\in(\bar{\tau},T];\\
\Phi_t= M_1, \ t\in[0,\bar{\tau}],
\end{cases}
\end{equation}
for the case that $w\geq k^* y$, or
\begin{equation}\label{verification2}
\begin{cases}
\Phi_t< M_2, \ t\in[0,\underline{\tau})\cup(\bar{\tau},T];\\
\Phi_t= M_2, \ t\in[\underline{\tau},\bar{\tau}],
\end{cases}
\end{equation}
for the case that $w<k^* y$, where $M_1$, $M_2$ are positive constants.

 \textbf{Case 1:} $w\geq k^* y$. We may get the associated level of satisfaction as
\begin{equation}\label{optimaly1}
Y_t=\begin{cases}
\frac{y+\beta w}{1+\beta k^*}e^{-\alpha(\delta-r)t},  & t\in[0,\bar{\tau}],\\
\frac{y+\beta w}{1+\beta k^*}e^{-\alpha(\delta-r)\bar{\tau}}e^{-\beta(t-\bar{\tau})}, & t\in(\bar{\tau},T].
\end{cases}
\end{equation}
By simple calculations, we could get that for any $t\in[0,\bar{\tau}]$,
\begin{displaymath}
\Phi_{t}=M
\end{displaymath}
and for any $t\in(\bar{\tau},T]$,
\begin{displaymath}
\Phi_t=M(r+\beta)e^{(r+\beta)(t-\bar{\tau})}e^{[\delta+\beta(1-\frac{1}{\alpha})]\bar{\tau}}\frac{1}{\delta+\beta(1-\frac{1}{\alpha})}\left(e^{-[\delta+\beta(1-\frac{1}{\alpha})]t}-e^{-[\delta+\beta(1-\frac{1}{\alpha})T]}\right),
\end{displaymath}
where
\begin{displaymath}
M=\left(\int_0^T\delta e^{-\delta s}Y^{1-\frac{1}{\alpha}} _s ds\right)^{\psi-1}\frac{\beta\delta}{r+\beta}\left(\frac{y+\beta w}{1+\beta k^*}\right)^{-1/\alpha}.
\end{displaymath}
It is easy to check that $\Phi'_t<0$ if $t>\bar{\tau}$. Hence, for $t\in(\bar{\tau},T]$, we have
$$\Phi_t<\Phi_{\bar{\tau}}=M.$$
The above analysis indicates that conditions \eqref{verification1} are satisfied. Besides, we may check that $C$ given by \eqref{optimalc1} satisfies $\int_0^T e^{-rs}dC_s=w$. Hence, $C$ is the optimal consumption plan.

\textbf{Case 2:} $w<k^* y$. We may get the associated level of satisfaction as
\begin{equation}\label{optimaly2}
Y_t=\begin{cases}
ye^{-\beta t}, &t\in[0,\underline{\tau}),\\
(M^*\frac{r+\beta}{\beta})^{-\alpha} e^{-\alpha(\delta-r)t},  & t\in[\underline{\tau},\bar{\tau}],\\
(M^*\frac{r+\beta}{\beta})^{-\alpha} e^{-\alpha(\delta-r)\bar{\tau}}e^{-\beta(t-\bar{\tau})}, & t\in(\bar{\tau},T].
\end{cases}
\end{equation}
By simple calculations, we could get that for any $t\in[\underline{\tau},\bar{\tau}]$,
\begin{displaymath}
\Phi_{t}=M,
\end{displaymath}
where $M=\delta M^*(\int_0^T\delta e^{-\delta s}Y^{1-\frac{1}{\alpha}} _s ds)^{\psi-1}$.
When $t\in(\bar{\tau},T]$, we obtain that
\begin{displaymath}
\Phi_t= M(r+\beta) e^{[\delta-r-\frac{\beta}{\alpha}]\bar{\tau}}
\frac{1}{\delta+\beta(1-\frac{1}{\alpha})}\left[ e^{(r-\delta+\frac{\beta}{\alpha})t}-e^{-[\delta+\beta(1-\frac{1}{\alpha})]T+(r+\beta)t}\right].
\end{displaymath}
We may check that $\Phi'_t<0$ if $t>\bar{\tau}$. Hence, for $t\in(\bar{\tau},T]$, we have
$$\Phi_t<\Phi_{\bar{\tau}}=M.$$
For the case that $t\in[0,\underline{\tau})$, simple calculation yields that
\begin{displaymath}
\Phi_t=\frac{M}{M^*}\left[M^* e^{-(r+\beta)(t-\underline{\tau})}+\frac{\beta y^{-\frac{1}{\alpha}}}{\delta+\beta(1-\frac{1}{\alpha})}   \left(e^{-(r+\frac{\beta}{\alpha}-\delta)t}-e^{-[\delta+\beta(1-\frac{1}{\alpha})]\underline{\tau}+(r+\beta)t}\right)  \right].
\end{displaymath}
We may check that $\Phi'_t>0$ if $t<\underline{\tau}$. Hence, for $t\in[0,\underline{\tau})$, we have
$$\Phi_t<\Phi_{\underline{\tau}}=M.$$
All above analysis indicates that conditions \eqref{verification2} are satisfied. Besides, we may check that $C$ given by \eqref{optimalc2} satisfies $\int_0^T e^{-rs}dC_s=w$. Hence, $C$ given by \eqref{optimalc2} is the optimal consumption plan.
\end{proof}

\begin{remark}
Clearly, in the deterministic setting, the optimal consumption plan does not depend on the relative risk aversion coefficient $\rho$. \end{remark}

\begin{remark}
	Given the level of satisfaction $Y$, the recursive utility induced by $Y$ can be written as
	\begin{equation}\label{U}
		U_t=\frac{1}{1-\rho}\left(\int_t^T  \delta e^{\delta (t-s)}Y_s^{1-\frac{1}{\alpha}} ds\right)^{\psi}.
	\end{equation}
It is clear that the optimal consumption plan maximizing $U_0$ over all $C\in\mathcal{A}(w)$ is equivalent to the optimal consumption plan maximizing $\widetilde{U}_0$, where
\begin{displaymath}
	\widetilde{U}_0=\int_0^T  \frac{1}{1-\frac{1}{\alpha}}e^{\delta (t-s)}Y_s^{1-\frac{1}{\alpha}} ds.
\end{displaymath}
The latter problem has been studied  in \cite{BR00} (see Theorem 5.4 with $\alpha$ replacing by $1-1/\alpha$). Therefore, the optimal consumption plan in the recursive setting is the same with the one in \cite{BR00}. That is to say, the solution of utility maximization problem when the felicity function is of Epstein-Zin's type coincides with the one for the time additive utility maximization problem, which is also indicated in \cite{SS99}.
\end{remark}

\begin{remark}
If Assumption \ref{assumptionf} does not hold or $\bar{\tau}$ defined by \eqref{bartau} is nonpositive, then the optimal consumption plan $C^*$ is to consume the whole wealth $w$ at time $0$. In fact, simple calculation implies that $\Phi^*_t:=\psi_t^{-1}\nabla V(C^*)(t)$ is strictly decreasing in $t$. Thus, by the first order condition obtained in Theorem \ref{T2}, $C^*$ is indeed optimal.
\end{remark}

\subsection{Approximation of the utility}

In this section, we consider the approximation of the utility $U$ in (\ref{3}). Let us consider the following dynamic structure with $0\leq s\leq t\leq T$,
\begin{equation}\label{dpp-y}
dY^{C,s,y}_t=\beta(-Y^{C,s,y}_tdt+dC_t)
\end{equation}
with $Y^{C,s,y}_s=y,\beta>0$, and
\begin{displaymath}\label{dpp-w}
 \psi_t=e^{-r (t-s)}.
 \end{displaymath}
Furthermore, we introduce a dynamic constraint structure, where
\begin{equation}
dX^{C,s,w}_t=rX^{C,s,w}_tdt-dC_t,\ X^{C,s,w}_s=w.
\end{equation}
Then, it follows that
$$
X^{C,s,w}_t=e^{r(t-s)}\left(w-\int_s^te^{-r(h-s)}dC_h\right).
$$

Let $\mathcal{K}[s,t]$ denote the set of nonnegative, nondecreasing and right-continuous functions on $[s,t]$,  and thus
$$
\mathcal{A}[s,t]=\{C\in\mathcal{K}[s,t],\ \int_s^te^{-r(h-s)}dC_h\leq w \}
$$
is equal to $\mathcal{B}[s,t:w,y]$, which satisfies
$$
\mathcal{B}[s,t:w,y]=\{C:\ C\in \mathcal{K}[s,t], Y^C_s=y,\ X^{C,s,w}_s=w, X^{C,s,w}_r\geq 0,\ s\leq r\leq t\}.
$$

The agent's recursive utility of the HHK type is of the form
\begin{equation}\label{dpp-0}
U^C_s=\int_s^T f(t,Y_t^{C,w,y},U^C_t)dt,\ C\in \mathcal{A}[s,T],
\end{equation}
and agent wants to maximum the recursive utility at time $0$,
$$
U(0,w,y)=\sup_{C\in \mathcal{A}[0,T]}U^C_0,
$$
which equals to
\begin{equation}\label{dpp-1}
U(0,w,y)=\sup_{C\in \mathcal{B}[0,T:w,y]}\int_0^T f(t,Y_t^{C,w,y},U^C_t)dt.
\end{equation}

Based on the new formula (\ref{dpp-1}), the related classical dynamic programming principle is given as follows. For notations simplicity, we omit the initial states of $X^{C,s,w}$ and $Y^{C,s,y}$ and denote by $X^{C}$ and $Y^{C}$.
\begin{theorem}\label{dpp-thm}
Let Assumption \ref{a1} hold, we have that
\begin{equation}\label{dpp-3}
U(s,x,y)=\sup_{C\in \mathcal{B}[s,t:w,y]}\left[\int_s^t f(r,Y_r^C,U(r,X_r^C,Y_r^C))dr+U(t,X_t^C,Y_t^C)\right].
\end{equation}
\end{theorem}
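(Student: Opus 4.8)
The plan is to prove the identity by the two standard inequalities, built on two structural facts that I would establish first. The key ingredient is the \emph{flow property} of the recursive utility: splitting the integral in \eqref{dpp-0} at $t$ gives, for every admissible $C$ and every $s\le t\le T$,
\begin{equation*}
U^C_s=\int_s^t f(r,Y^C_r,U^C_r)\,dr+U^C_t ,
\end{equation*}
and, because the tail $U^C_t$ is the recursive utility of the restriction $C|_{[t,T]}$ started from the state $(X^C_t,Y^C_t)$, one has $U^C_t\le U(t,X^C_t,Y^C_t)$, with equality when the continuation is chosen optimally. The second fact is the pasting/restriction structure of the budget sets $\mathcal B[\cdot,\cdot:\cdot,\cdot]$: restricting a plan to $[t,T]$ is admissible from $(X^C_t,Y^C_t)$, and concatenating an admissible plan on $[s,t]$ with an admissible continuation on $[t,T]$ returns an admissible plan from $(w,y)$. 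Both follow directly from the linear dynamics \eqref{dpp-y}--\eqref{dpp-w} and the budget constraint. I would also record, via time-consistency, that along the (pasted, nearly optimal) trajectory the value function evaluated at the running state coincides up to an error with the recursive-utility process, which is what legitimizes the integrand $f(r,Y^C_r,U(r,X^C_r,Y^C_r))$.

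For the inequality $U(s,w,y)\le(\text{RHS})$, I would fix an arbitrary $C\in\mathcal B[s,T:w,y]$ and compare $U^C$ on $[s,t]$ with the backward ODE carrying the same driver $f(r,Y^C_r,\cdot)$ but the larger terminal value $U(t,X^C_t,Y^C_t)$. Since $f$ is uniformly Lipschitz in its last argument (Assumption \ref{a1}(ii)), the difference of the two solutions satisfies a linear backward ODE with bounded coefficient, so by the backward Gronwall estimate recalled in the footnote $U^C_s$ is dominated by the solution started from the larger terminal value, which in turn is bounded by the bracketed functional; taking the supremum over $C$ gives the inequality.

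For the reverse inequality I would argue by pasting: given $\varepsilon>0$, pick $C^1\in\mathcal B[s,t:w,y]$ almost attaining the right-hand supremum and an $\varepsilon$-optimal continuation $C^2$ from $(X^{C^1}_t,Y^{C^1}_t)$, so that $U^{C^2}_t\ge U(t,X^{C^1}_t,Y^{C^1}_t)-\varepsilon$. The concatenation $C:=C^1\oplus C^2$ is admissible from $(s,w,y)$, hence $U(s,w,y)\ge U^C_s$, and the flow property together with the same Lipschitz/comparison estimate bounds $U^C_s$ below by the bracketed functional evaluated at $C^1$ up to an $O(\varepsilon)$ term; letting $\varepsilon\downarrow0$ and combining with the first step yields equality. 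Measurability and boundedness of $r\mapsto U(r,X^C_r,Y^C_r)$, needed for the integrals and for dominated convergence, come from the continuity of the utility functional in Proposition \ref{p1} and the a priori bound \eqref{U1}.

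The hard part, which is absent in the time-additive model of \cite{BR00}, is that here the continuation value sits \emph{inside} the aggregator $f$ on the interval $[s,t]$: the running contribution is not a fixed reward but depends on the utility process, so rewards cannot simply be summed. The device that resolves this is the comparison theorem for the backward ODE driven by $f$, which upgrades the pointwise inequality $U^C_t\le U(t,X^C_t,Y^C_t)$ between terminal data into an inequality between the whole utility processes on $[s,t]$; the Lipschitz constant in Assumption \ref{a1}(ii) and the backward Gronwall inequality make this quantitative and let the $\varepsilon$-errors from the pasting step be absorbed in the limit.
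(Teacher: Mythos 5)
First, a remark on the comparison you were asked to make: the paper states Theorem \ref{dpp-thm} as ``the related classical dynamic programming principle'' and gives no proof at all, so there is no argument of the authors' to measure yours against; I can only assess your proposal on its own terms. Its skeleton --- the flow property $U^C_s=\int_s^t f(r,Y^C_r,U^C_r)dr+U^C_t$, restriction and pasting of the budget sets, an $\varepsilon$-optimal continuation, and the backward Gronwall/comparison estimate --- is the right one, and you correctly locate the crux: on $[s,t]$ the continuation value sits inside the aggregator. But the device you invoke to resolve that crux does not close the argument. The comparison theorem compares two solutions of the backward ODE $u_r=\xi+\int_r^t f(\sigma,Y^C_\sigma,u_\sigma)d\sigma$ with ordered terminal data; it yields $U^C_s\le \widetilde u_s$, where $\widetilde u_r=\mathcal{E}^C_{r,t}\bigl[U(t,X^C_t,Y^C_t)\bigr]$ is the backward solution with the value function as terminal datum. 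The bracket in \eqref{dpp-3} is a different object: the plain integral $\int_s^t f\bigl(r,Y^C_r,U(r,X^C_r,Y^C_r)\bigr)dr+U(t,X^C_t,Y^C_t)$, with the \emph{value function along the trajectory} substituted into $f$. For a suboptimal $C$ one only has $\widetilde u_r\le U(r,X^C_r,Y^C_r)$, and since Assumption \ref{a1} gives Lipschitz continuity of $f$ in $u$ but \emph{not} monotonicity (for the Epstein--Zin aggregator $\partial_u f$ changes sign), neither your claim that ``$\widetilde u_s$ is bounded by the bracketed functional'' nor the claim that the flow property ``bounds $U^C_s$ below by the bracketed functional evaluated at $C^1$ up to $O(\varepsilon)$'' follows: the gap $U(r,X^C_r,Y^C_r)-U^C_r$ is not small for a plan that merely nearly maximizes the bracket, and without monotonicity the sign of $f(r,Y_r,W_r)-f(r,Y_r,\widetilde u_r)$ is uncontrolled.

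What your argument actually establishes is the nonlinear-expectation form $U(s,w,y)=\sup_{C}\mathcal{E}^C_{s,t}\bigl[U(t,X^C_t,Y^C_t)\bigr]$; passing from this to \eqref{dpp-3} is the missing content. Concretely, for ``$U(s,w,y)\le\sup_C[\cdot]$'' you should evaluate the bracket only at an optimal plan $C^*$ (which exists by Section 2) and use time consistency, i.e.\ $U^{C^*}_r=U(r,X^{*}_r,Y^{*}_r)$ for all $r$ --- itself proved by the strict form of the comparison theorem --- so that the bracket at $C^*|_{[s,t]}$ equals $U(s,w,y)$ exactly; your route through an arbitrary $C$ is the one that breaks. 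For ``$\sup_C[\cdot]\le U(s,w,y)$'' one must show that $W_r:=U(r,X^C_r,Y^C_r)$ satisfies $W_r\ge W_{r'}+\int_r^{r'}f(\sigma,Y^C_\sigma,W_\sigma)d\sigma$ along \emph{every} admissible trajectory. This follows from the pasting inequality $W_r\ge\mathcal{E}^C_{r,r'}[W_{r'}]$ applied on a partition $s=r_0<\dots<r_n=t$, using the a priori bound \eqref{U1} to control $|\mathcal{E}^C_{r_i,r_{i+1}}[W_{r_{i+1}}]-W_{r_{i+1}}-\int_{r_i}^{r_{i+1}}f(\sigma,Y_\sigma,W_{r_{i+1}})d\sigma|$ by $O((r_{i+1}-r_i)^2)$, and then letting the mesh tend to zero; this in turn requires the right-continuity of $r\mapsto U(r,X^C_r,Y^C_r)$, i.e.\ joint continuity of the value function in $(r,x,y)$, which is nowhere established in your proposal (Proposition \ref{p1} gives continuity in $C$ for fixed initial data only). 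Supplying the time-consistency lemma, the partition argument, and the regularity of $(r,x,y)\mapsto U(r,x,y)$ is what a complete proof requires.
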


Based on  dynamic programming principle (\ref{dpp-3}), we consider the following iteration equations,
\begin{equation}\label{dpp-4}
U^{(n)}(s,x,y)=\sup_{C\in \mathcal{B}[s,T:x,y]}\left[\int_s^T f(r,Y_r^C,U^{(n-1)}(r,X_r^C,Y_r^C))dr\right],\ n\geq 1,
\end{equation}
with $U^{(0)}=0$.
\begin{theorem}\label{dpp-lem}
Let Assumption \ref{a1} hold, we have that sequence $\{U^{(n)}(\cdot)\}$ uniformly converges on $[0,T]$.
\end{theorem}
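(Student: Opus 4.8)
The plan is to show that $\{U^{(n)}\}$ is uniformly Cauchy by exploiting the Lipschitz dependence of $f$ on its last argument (Assumption \ref{a1}(ii)), which turns the iteration \eqref{dpp-4} into a backward Gronwall recursion whose successive differences decay like $(KT)^n/n!$.

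First I would fix a compact state domain and reduce to uniform bounds there. Starting from any $(s,x,y)$ with $x\le\bar x$, $y\le\bar y$, the admissibility constraint $X^C_r\ge 0$ together with the dynamics \eqref{dpp-y} forces $0\le X^C_r\le e^{rT}\bar x$ and $0\le Y^C_r\le \bar y+C_1$ for all $r\in[s,T]$ and all $C\in\mathcal B[s,T:x,y]$, where $C_1$ depends only on $\bar x,\beta,r,T$. Hence there is a compact, forward-invariant domain $\tilde D\subset\mathbb R_+^2$ containing $[0,\bar x]\times[0,\bar y]$ such that every reachable pair $(X^C_r,Y^C_r)$ stays in $\tilde D$. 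On $\tilde D$ I would check inductively that each $U^{(n)}(s,\cdot,\cdot)$ is well defined and bounded: by Assumption \ref{a1}(ii)--(iii) the integrand in \eqref{dpp-4} is controlled by $K(1+|Y^C_r|^\alpha)+K\|U^{(n-1)}\|_\infty$, which is finite on $\tilde D$. In particular $\delta_1(s):=\sup_{(x,y)\in\tilde D}|U^{(1)}(s,x,y)|$ (recall $U^{(0)}=0$) is bounded by a constant $M_0$ uniformly in $s$.

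The key step is to establish the recursion. Let $\delta_n(s)=\sup_{(x,y)\in\tilde D}|U^{(n)}(s,x,y)-U^{(n-1)}(s,x,y)|$. For fixed $(s,x,y)$, using $|\sup_C A(C)-\sup_C B(C)|\le\sup_C|A(C)-B(C)|$ together with Assumption \ref{a1}(ii) gives
\begin{align*}
|U^{(n)}(s,x,y)-U^{(n-1)}(s,x,y)| &\le \sup_{C}\int_s^T K\,|U^{(n-1)}(r,X^C_r,Y^C_r)-U^{(n-2)}(r,X^C_r,Y^C_r)|\,dr\\
&\le K\int_s^T \delta_{n-1}(r)\,dr,
\end{align*}
where forward invariance of $\tilde D$ is what lets me bound the integrand by $\delta_{n-1}(r)$. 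Taking the supremum over $(x,y)\in\tilde D$ yields the backward recursion $\delta_n(s)\le K\int_s^T\delta_{n-1}(r)\,dr$.

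Finally I would iterate: a backward induction from $\delta_1\le M_0$ gives $\delta_{n+1}(s)\le M_0 K^n(T-s)^n/n!\le M_0(KT)^n/n!$ for all $s\in[0,T]$. Since $\sum_n M_0(KT)^n/n!=M_0 e^{KT}<\infty$, the telescoping series $\sum_n(U^{(n+1)}-U^{(n)})$ converges absolutely and uniformly on $[0,T]\times\tilde D$ by the Weierstrass $M$-test; hence $\{U^{(n)}\}$ is uniformly Cauchy and converges uniformly (in $s$, on $\tilde D$) to a limit $U$, which by passing to the limit in \eqref{dpp-4} solves \eqref{dpp-3}. I expect the main obstacle to be the bookkeeping guaranteeing that the reachable states stay in a single compact set $\tilde D$ independent of $n$, so that one scalar sequence $\delta_n$ legitimately dominates the integrand at every stage; the growth control in Assumption \ref{a1}(iii) is exactly what keeps $M_0$, and thus every $U^{(n)}$, finite on that domain.
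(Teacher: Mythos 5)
Your proof is correct, but it takes a genuinely different route from the paper's. The paper bounds the successive differences crudely by $\sup_{r}\,|U^{(n)}-U^{(n-1)}|\cdot L(T-s)$, which only gives a contraction when $L(T-s)<1$; it therefore first proves convergence on a terminal interval $[s,T]$ with $L(T-s)<1$, and then propagates the convergence backward over finitely many short subintervals $[s_0,s]$ by invoking the dynamic programming principle (Theorem \ref{dpp-thm}) to restart the iteration at the right endpoint, absorbing the already-established convergence there into an $\varepsilon$. You instead keep the time integral in the recursion, $\delta_n(s)\le K\int_s^T\delta_{n-1}(r)\,dr$, and iterate it to obtain the factorial decay $\delta_{n+1}(s)\le M_0(K(T-s))^n/n!$, so the Weierstrass $M$-test gives uniform convergence on all of $[0,T]$ in one pass. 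This is the classical Picard argument; it buys you a cleaner, single-step proof that does not need the interval-splitting or the DPP at all, at the cost of the bookkeeping you correctly identify: one must fix a compact set $\tilde D$ containing all states reachable from the initial domain (which the budget constraint and the dynamics of $Y$ do provide), so that a single scalar function $\delta_{n-1}(r)$ dominates the integrand uniformly over controls. The paper's route is less sharp per step but sidesteps that domain bookkeeping by working with sups over reachable trajectories directly. Both arguments are valid; yours is arguably the more standard and more economical one, and it additionally yields the explicit rate $M_0 e^{KT}$ for the tail of the telescoping series.
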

\begin{proof}
Applying the Lipschitz  condition of $f$ on $U$, we have that
\begin{align*}
&\left|U^{(n+1)}(s,x,y)-U^{(n)}(s,x,y)\right|\\
=& \left|\sup_{C\in \mathcal{B}[s,T:x,y]}\left[\int_s^T f(r,Y_r^C,U^{(n)}(r,X_r^C,Y_r^C))dr\right]-\sup_{C\in \mathcal{B}[s,T:x,y]}\left[\int_s^T f(r,Y_r^C,U^{(n-1)}(r,X_r^C,Y_r^C))dr\right]\right|              \\
\leq &\sup_{C\in \mathcal{B}[s,T:x,y]}  \left|\int_s^T \left[f(r,Y_r^C,U^{(n)}(r,X_r^C,Y_r^C))dr-f(r,Y_r^C,U^{(n-1)}(r,X_r^C,Y_r^C))\right]dr\right|               \\
\leq& \sup_{C\in \mathcal{B}[s,T:x,y]}\int_s^TL\left|U^{(n)}(r,X_r^C,Y_r^C)-U^{(n-1)}(r,X_r^C,Y_r^C)\right|dr \\
\leq &\sup_{C\in \mathcal{B}[s,T:x,y]}\sup_{s\leq r\leq T}\left|U^{(n)}(r,X_r^C,Y_r^C)-U^{(n-1)}(r,X_r^C,Y_r^C)\right|L(T-s),
\end{align*}
where $L$ is the Lipschitz constant of $f$ on $U$, which deduces that, for any given $C\in \mathcal{B}[s,T]$ and $r\in [s,T]$,
\begin{align*}
&\left|U^{(n+1)}(r,X_r^C,Y_r^C)-U^{(n)}(r,X_r^C,Y_r^C)\right|\\
\leq &\sup_{C\in \mathcal{B}[s,T:x,y]}\sup_{s\leq r\leq T}\left|U^{(n)}(r,X_r^C,Y_r^C)-U^{(n-1)}(r,X_r^C,Y_r^C)\right|L(T-s).
\end{align*}
and thus
\begin{align*}
&\sup_{C\in \mathcal{B}[s,T:x,y]}\sup_{s\leq r\leq T}\left|U^{(n+1)}(r,X_r^C,Y_r^C)-U^{(n)}(r,X_r^C,Y_r^C)\right|\\
\leq &\sup_{C\in \mathcal{B}[s,T:x,y]}\sup_{s\leq r\leq T}\left|U^{(n)}(r,X_r^C,Y_r^C)-U^{(n-1)}(r,X_r^C,Y_r^C)\right|L(T-s).
\end{align*}
Then, let $s$ satisfy $L(T-s)<1$, we have $U^{(n)}(\cdot)$ converges on $[s,T]$.

Now, we consider the time interval $[s_0,s]$ which satisfies $L(s-s_0)<1$. For initial state $(s_0,x_0,y_0)$,
$$
U^{(n)}(s_0,x_0,y_0)=\sup_{C\in \mathcal{B}[s_0,s:x_0,y_0]}\left[\int_{s_0}^s f(r,Y_r^C,U^{(n-1)}(r,X_r^C,Y_r^C))dr+U^{(n)}(s,X_s^C,Y_s^C)\right],\ n\geq 1.
$$
Using a similar manner on time interval $[s,T]$ case, we can show that,
\begin{align*}
&\sup_{C\in \mathcal{B}[s_0,s:x_0,y_0]}\sup_{s_0\leq r\leq s}\left|U^{(n+1)}(r,X_r^C,Y_r^C)-U^{(n)}(r,X_r^C,Y_r^C)\right|\\
\leq &\sup_{C\in \mathcal{B}[s_0,s:x_0,y_0]}\sup_{s_0\leq r\leq s}\bigg{[}\left|U^{(n)}(r,X_r^C,Y_r^C)-U^{(n-1)}(r,X_r^C,Y_r^C)\right|L(s-s_0)\\
&+\left|U^{(n+1)}(s,X_s^C,Y_s^C)-U^{(n)}(s,X_s^C,Y_s^C)\right|\bigg{]}.
\end{align*}
Note that $U^{(n)}(\cdot)$ converges on $[s,T]$, thus, for any given $\varepsilon>0$, there exists $N$ such that when $n>N$,
$$
\sup_{C\in \mathcal{B}[s_0,s:x_0,y_0]}\left|U^{(n+1)}(s,X_s^C,Y_s^C)-U^{(n)}(s,X_s^C,Y_s^C)\right|<\varepsilon,
$$
and thus
\begin{align*}
&\sup_{C\in \mathcal{B}[s_0,s:x_0,y_0]}\sup_{s_0\leq r\leq s}\left|U^{(n+1)}(r,X_r^C,Y_r^C)-U^{(n)}(r,X_r^C,Y_r^C)\right|\\
\leq &\sup_{C\in \mathcal{B}[s_0,s:x_0,y_0]}\sup_{s_0\leq r\leq s}\left|U^{(n)}(r,X_r^C,Y_r^C)-U^{(n-1)}(r,X_r^C,Y_r^C)\right|L(s-s_0)+\varepsilon.
\end{align*}

By inductive method, we complete the proof.
\end{proof}

\begin{remark}
Now, we consider how to find $t_0(M),t_1(M)$ and $C^M$ in the forward-backward system \eqref{resulty}-\eqref{verification}. However, there is no general result for the existence for this system. We try to solve it by Picard iteration. Since on $t\in[t_0(M),t_1(M)]$, $\Phi^M_t=M$. Taking derivative w.r.t $t$ on both sides yields that
\begin{equation}\label{derivative}
(r+\beta)M-\beta e^{rt}\exp\left(\int_0^t \partial_u f(r,Y^M_r,U^M_r)dr\right) \partial_y f(t,Y^M_t,U^M_t)=0.
\end{equation}

 Let $Y^{(0)}_t=y e^{-\beta t}$, which is the level of satisfaction with consumption plan $C^{(0)}\equiv 0$. Let $U^{(0)}$ be the corresponding utility function. We define
\begin{displaymath}
\hat{t}^1_0(M):=\inf\left\{\partial_y f(t, ye^{-\beta t},U^{(0)}_t)\geq \frac{M(r+\beta)}{\beta}e^{-rt}\exp\left(-\int_0^t \partial_u f(r,Y^{(0)}_r,U^{(0)}_r)dr\right)\right\}.
\end{displaymath}
Let $I^{(1)}$ be the solution of the following equation
\begin{displaymath}
(r+\beta)M-\beta e^{rt}\exp\left(\int_0^t \partial_u f(r,Y^{(0)}_r,U^{(0)}_r)dr\right) \partial_y f(t,I^{(1)}_t,U^{(0)}_t)=0.
\end{displaymath}
Then, we define
\begin{displaymath}
t_1^1(M):=\begin{cases}
\textrm{the unique solution $t$ of $\Phi^{(1)}_t=M$ in (0,T] if there is some,}\\
\textrm{$0$ otherwise},
\end{cases}
\end{displaymath}
where
\begin{displaymath}
\Phi^{(1)}_t=\beta e^{(r+\beta)t}\int_t^T \exp\left(\int_0^s \partial_u f(r,Y^{(0)}_r,U^{(0)}_r)dr\right) \partial_y f(s,I^{(1)}_t e^{-\beta(s-t)},U^{(0)}_s)e^{-\beta s}ds.
\end{displaymath}
Set $t_0^1(M)=\hat{t}_0^1(M)\wedge t_1^1(M)$ and
\begin{displaymath}
{Y}^{(1)}_t=\begin{cases}
y e^{-\beta t}, &t\in[0,t^0_0(M));\\
I^{(1)}_t, &t\in[t^1_0(M),t^1_1(M)];\\
 I^{(1)}_{t^1_1(M)} e^{-\beta(t-t_1^1(M))}, &t\in(t^1_1(M),T].
\end{cases}
\end{displaymath}
We can then obtain the associated utility function $U^{(1)}$. For any $n=2,3,\cdots$, We define
\begin{displaymath}
\hat{t}^n_0(M):=\inf\left\{\partial_y f(t, ye^{-\beta t},U^{(n-1)}_t)\geq \frac{M(r+\beta)}{\beta}e^{-rt}\exp\left(-\int_0^t \partial_u f(r,Y^{(n-1)}_r,U^{(n-1)}_r)dr\right)\right\}.
\end{displaymath}
Let $I^{(n)}$ be the solution of the following equation
\begin{displaymath}
(r+\beta)M-\beta e^{rt}\exp\left(\int_0^t \partial_u f(r,Y^{(n-1)}_r,U^{(n-1)}_r)dr\right) \partial_y f(t,I^{(n)}_t,U^{(n-1)}_t)=0.
\end{displaymath}
Then, we define
\begin{displaymath}
t_n^1(M):=\begin{cases}
\textrm{the unique solution $t$ of $\Phi^{(n)}_t=M$ in (0,T] if there is some,}\\
\textrm{$0$ otherwise},
\end{cases}
\end{displaymath}
where
\begin{displaymath}
\Phi^{(n)}_t=\beta e^{(r+\beta)t}\int_t^T \exp\left(\int_0^s \partial_u f(r,Y^{(n-1)}_r,U^{(n-1)}_r)dr\right) \partial_y f(s,I^{(n)}_t e^{-\beta(s-t)},U^{(n-1)}_s)e^{-\beta s}ds.
\end{displaymath}
Set $t_0^n(M)=\hat{t}_0^n(M)\wedge t_1^n(M)$ and
\begin{displaymath}
{Y}^{(n)}_t=\begin{cases}
y e^{-\beta t}, &t\in[0,t^0_n(M));\\
I^{(n)}_t, &t\in[t^n_0(M),t^n_1(M)];\\
 I^{(n)}_{t^n_1(M)} e^{-\beta(t-t_1^n(M))}, &t\in(t^n_1(M),T].
\end{cases}
\end{displaymath}
It remains to prove the convergence of sequence $\{U^{(n)}\}$. Note that, $t_0^n(M), t_1^n(M)$, $Y^{(n)}$ are the optimal strategies with the given utility $U^{(n-1)}$. By Theorem \ref{dpp-lem}, we have that sequence $\{U^{(n)}(\cdot)\}$ uniformly converges on $[0,T]$.
\end{remark}

\appendix
\renewcommand\thesection{Appendix}
\section{ }
\label{app}
\renewcommand\thesection{A}

In this section, we provide some technical results which are useful to obtain the property of recursive utility, the first order conditions and the construction for optimal consumption plan.

\begin{lemma}\label{simple}
	Suppose that $\delta_t=\int_t^T (g_s+h_s)ds$, $t\in[0,T]$, where $g,h$ satisfy
	\begin{itemize}
		\item[(i)] $h_s\geq -K|\delta_s|$, $s\in[0,T]$ for some constant $K>0$,
		\item[(ii)] $g_s\geq 0$, $s\in[0,T]$ and furthermore, $g_s>0$ on some interval $(t_0,t_1)$.
	\end{itemize}
	Then, we have $\delta_0>0$.
\end{lemma}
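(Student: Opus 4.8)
The plan is to exploit that $\delta$ is absolutely continuous with $\delta_T = 0$ and $\delta_t' = -(g_t + h_t)$ for a.e. $t$ (the standing integrability of $g$ and $h$ makes this legitimate), and to argue in two stages: first that $\delta_t \ge 0$ on all of $[0,T]$, and then that the strict positivity of $g$ on $(t_0,t_1)$ upgrades this to $\delta_0 > 0$. The whole difficulty is that condition (i) controls $h$ only through $-K|\delta|$, and the absolute value blocks a single direct application of the backward Gronwall inequality; resolving the sign of $\delta$ first is exactly what renders the absolute value harmless.

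For the first stage I would argue by contradiction. Suppose $\delta_\tau < 0$ for some $\tau$, and set $b := \inf\{t \ge \tau : \delta_t \ge 0\}$; since $\delta_T = 0$ this is well defined with $\tau < b \le T$, $\delta_b = 0$, and $\delta_t < 0$ on $[\tau,b)$. On this interval $|\delta_t| = -\delta_t$, so (i) together with $g_t \ge 0$ gives $\delta_t' = -(g_t + h_t) \le K|\delta_t| = -K\delta_t$, i.e. $\frac{d}{dt}\big(e^{Kt}\delta_t\big) \le 0$ for a.e.\ $t$. Hence $e^{Kt}\delta_t$ is non-increasing on $[\tau,b]$, so $e^{K\tau}\delta_\tau \ge e^{Kb}\delta_b = 0$, contradicting $\delta_\tau < 0$. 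Therefore $\delta_t \ge 0$ throughout.

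With $\delta \ge 0$ in hand we have $|\delta_t| = \delta_t$, so (i) now reads $h_t \ge -K\delta_t$, whence $\delta_t' = -(g_t + h_t) \le K\delta_t - g_t$, which is equivalent to $\frac{d}{dt}\big(e^{-Kt}\delta_t\big) \le -e^{-Kt} g_t \le 0$. Integrating from $0$ to $T$ and using $\delta_T = 0$ yields the pointwise lower bound $\delta_0 \ge \int_0^T e^{-Ks} g_s\,ds$. Since $g_s \ge 0$ everywhere and $g_s > 0$ on the interval $(t_0,t_1)$ of positive length, $\int_0^T e^{-Ks} g_s\,ds \ge \int_{t_0}^{t_1} e^{-Ks} g_s\,ds > 0$, giving $\delta_0 > 0$ as claimed.

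I expect the main obstacle to be precisely the sign issue described above: one cannot feed $-K|\delta|$ directly into a Gronwall estimate, so the nonnegativity step is indispensable and must precede the strict-positivity bound, after which the two integrating-factor computations (with weights $e^{Kt}$ and $e^{-Kt}$ respectively) do all the work.
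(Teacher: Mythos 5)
Your proof is correct. It lands on exactly the same lower bound as the paper, namely $\delta_0\ge\int_0^T e^{-Ks}g_s\,ds>0$, but it gets there by a slightly different (and more self-contained) route. The paper introduces the comparison function $\phi_t=\int_t^T e^{-K(s-t)}g_s\,ds$, notes that $\phi'=K\phi-g$ with $\phi_T=0$ while $\delta'=-g-h$ with $\delta_T=0$, and concludes $\delta\ge\phi$ in one stroke by ``the comparison theorem for ODE and (i)''; making that one stroke rigorous requires observing that $\delta'\le K|\delta|-g$, that $\phi\ge0$ so $\phi$ solves $x'=K|x|-g$ as well, and that $x\mapsto K|x|-g_t$ is Lipschitz so backward comparison applies. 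You instead neutralize the absolute value up front: the contradiction argument with the integrating factor $e^{Kt}$ shows $\delta\ge0$ on $[0,T]$, after which $|\delta|=\delta$ and the second integrating factor $e^{-Kt}$ gives the bound by direct integration. Your version is more elementary (no comparison theorem invoked as a black box) at the cost of an extra step; the paper's is shorter but leaves the handling of $-K|\delta_s|$ implicit. Both are valid, and your identification of the sign issue as the crux is exactly right.
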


\begin{proof}
	Set $\phi_t=\int_t^T e^{-K(s-t)}g_s ds$. Then $\phi_0>0$ by the condition (ii).
		Moreover, $\phi$ satisfies the ODE
		\begin{displaymath}
			\phi'_t= K \phi - g
		\end{displaymath}
		and $\phi_T=0$.
		$\delta$ satisfies the ODE
		\begin{displaymath}
			\delta'_t= -g-h
		\end{displaymath}
		and $\delta_T=0$.		
		By the comparison theorem for ODE and (i), we have $\delta_t\geq \phi_t$, and in particular, $\delta_0 \ge \phi_0>0$.
\end{proof}


The following lemma can be regarded as a generalized Gronwall inequality.

\begin{lemma}\label{gronwall}
	Let $A=\{A_t\}$ be a bounded function, $B=\{B_t\}\in L^p([0,T], dt)$ for some $p>1$. Suppose that the right continuous function $X$   satisfies $X_T\geq C$ and
	\begin{displaymath}
		X_t\geq \int_t^s (A_rX_r+B_r)dr+X_s, \textrm{ for any } 0\leq t\leq s\leq T.
	\end{displaymath}
	Then, we have
	\begin{displaymath}
		X_t\geq \exp\left(\int_t^T A_r dr\right)C+\int_t^T\exp\left(\int_t^s A_rdr\right)B_sds.
	\end{displaymath}
\end{lemma}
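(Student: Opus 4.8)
The plan is to recognize the right-hand side as the value at $t$ of the linear backward ODE $Y'_r=-(A_rY_r+B_r)$ with terminal datum $Y_T=C$, solved by the integrating factor $\Lambda_t:=\exp\!\left(\int_0^tA_r\,dr\right)$, so that the assertion amounts to the comparison $X_t\ge Y_t$. Rather than construct $Y$ explicitly, I find it cleaner to work directly with the weighted quantity $W_t:=\Lambda_tX_t+\int_0^t\Lambda_rB_r\,dr$ and to show that $W$ is nonincreasing on $[0,T]$. Here $\Lambda$ is continuous, strictly positive and bounded away from $0$ and $\infty$ (since $\bigl|\int_0^tA_r\,dr\bigr|\le KT$ with $K:=\sup_r|A_r|$), and it satisfies $\Lambda_s-\Lambda_t=\int_t^s A_r\Lambda_r\,dr$. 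Granting the monotonicity of $W$, the conclusion follows by pure bookkeeping, carried out in the third paragraph.

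The heart of the argument is the monotonicity, i.e. $\Lambda_tX_t-\Lambda_sX_s\ge\int_t^s\Lambda_rB_r\,dr$ for $t\le s$. Since $X$ is only right-continuous, I cannot differentiate and must argue by discretization. Fix a partition $t=\tau_0<\tau_1<\dots<\tau_n=s$ and telescope, writing each increment as
\[
\Lambda_{\tau_i}X_{\tau_i}-\Lambda_{\tau_{i+1}}X_{\tau_{i+1}}=\Lambda_{\tau_i}\bigl(X_{\tau_i}-X_{\tau_{i+1}}\bigr)-\bigl(\Lambda_{\tau_{i+1}}-\Lambda_{\tau_i}\bigr)X_{\tau_{i+1}}.
\]
Applying the hypothesis $X_{\tau_i}-X_{\tau_{i+1}}\ge\int_{\tau_i}^{\tau_{i+1}}(A_rX_r+B_r)\,dr$ (legitimate since $\Lambda_{\tau_i}>0$) together with $\Lambda_{\tau_{i+1}}-\Lambda_{\tau_i}=\int_{\tau_i}^{\tau_{i+1}}A_r\Lambda_r\,dr$ bounds each term below by $\int_{\tau_i}^{\tau_{i+1}}\bigl[A_r(\Lambda_{\tau_i}X_r-\Lambda_rX_{\tau_{i+1}})+\Lambda_{\tau_i}B_r\bigr]\,dr$. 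Summing yields $\Lambda_tX_t-\Lambda_sX_s\ge\int_t^s\bigl[A_r(\bar\Lambda_rX_r-\Lambda_r\widehat X_r)+\bar\Lambda_rB_r\bigr]\,dr$, where $\bar\Lambda_r:=\Lambda_{\tau_i}$ and $\widehat X_r:=X_{\tau_{i+1}}$ on $[\tau_i,\tau_{i+1})$. As the mesh tends to $0$, $\bar\Lambda_r\to\Lambda_r$ by continuity of $\Lambda$ and $\widehat X_r\to X_r$ by right-continuity of $X$ (the node $\tau_{i+1}$ descends to $r$ from above); the integrands converge pointwise to $\Lambda_rB_r$ and are dominated by $C'(1+|B_r|)$ with $B\in L^1$, so dominated convergence gives $\Lambda_tX_t-\Lambda_sX_s\ge\int_t^s\Lambda_rB_r\,dr$, i.e. $W_t\ge W_s$.

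With monotonicity in hand I would conclude by evaluating at the endpoint: $W_t\ge W_T=\Lambda_TX_T+\int_0^T\Lambda_rB_r\,dr\ge\Lambda_TC+\int_0^T\Lambda_rB_r\,dr$. Subtracting $\int_0^t\Lambda_rB_r\,dr$ gives $\Lambda_tX_t\ge\Lambda_TC+\int_t^T\Lambda_rB_r\,dr$, and dividing by $\Lambda_t>0$ while using $\Lambda_s/\Lambda_t=\exp\!\left(\int_t^sA_r\,dr\right)$ produces exactly the asserted lower bound for $X_t$.

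I expect the discretization step to be the only delicate point. The hypothesis controls increments of $X$ in one direction only, and $X$ has no regularity beyond right-continuity, so the integrating-factor trick—immediate for differentiable $X$—must be pushed through a Riemann-sum limit in which right-continuity is used in an essential way to identify $\widehat X_r\to X_r$. For the dominated-convergence step I also use that $X$ is bounded (together with boundedness of $A$ and continuity of $\Lambda$); boundedness of $X$ is harmless in the applications of this lemma, where $X$ arises as a difference of uniformly bounded utility processes, and I would state it as the standing integrability requirement that makes the integrals in the hypothesis finite.
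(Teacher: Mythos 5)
Your argument is essentially correct, but note that the paper does not actually prove Lemma \ref{gronwall}: it only remarks that a more general stochastic version is available as Proposition B.1 in \cite{KSS17} and imports the result. Your proposal is therefore a genuinely different, self-contained route. The integrating-factor reformulation ($W_t=\Lambda_tX_t+\int_0^t\Lambda_rB_r\,dr$ nonincreasing, with $\Lambda_t=\exp(\int_0^tA_r\,dr)$), the telescoping identity for each partition increment, the use of $\Lambda_{\tau_{i+1}}-\Lambda_{\tau_i}=\int_{\tau_i}^{\tau_{i+1}}A_r\Lambda_r\,dr$ (valid since $\Lambda$ is Lipschitz with a.e.\ derivative $A\Lambda$), and the identification $\widehat X_r\to X_r$ via right-continuity are all correct, and the endpoint evaluation does deliver exactly the stated bound. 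What the citation buys the authors is generality (a stochastic comparison with martingale terms); what your argument buys is a short deterministic proof in which one sees precisely where each hypothesis is used. The one genuine issue is the point you flag yourself: the dominated-convergence step requires a bound on the step-function approximants $\widehat X$, and boundedness of $X$ is not among the stated hypotheses --- a right-continuous function on $[0,T]$ need not be bounded, and mere integrability of $X$ (which is anyway needed for $\int_t^sA_rX_r\,dr$ to make sense) does not dominate $\widehat X$ uniformly in the partition. Since in every application here $X$ is a difference of utility processes, which are uniformly bounded by \eqref{U1} on the budget set, this is a harmless strengthening, but it should be added to the statement (or one should restrict to bounded $X$) for the proof to close.
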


\begin{remark}
	Actually, a more general result in a stochastic setting has been established in \cite{KSS17} (see Proposition B.1 in \cite{KSS17}). 
\end{remark}

Let $C^*$ be the optimal consumption plan for problem \eqref{1}. We define
\begin{displaymath}
	\phi^*_t=e^{(r+\beta)t}\int_t^T \exp\left(\int_0^s \partial_u f(r,Y_r^*,U_r^*)dr\right)\partial_y f(s,Y_s^*,U_s^*)\beta e^{-\beta s}ds,
\end{displaymath}
where $Y^*=Y^{C^*}$ and $U^*=U^{C^*}$. By Corollary \ref{cor4.5}, the support of an optimal consumption plan is contained in $\arg\max \phi^*$. The following lemma indicates that these two sets coincides and they are a closed interval in $[0,T]$ under Assumption \ref{a5.1}.

\begin{lemma}\label{lem5.6}
Under Assumption \ref{a5.1}, we have $\supp dC^*=\arg\max \phi^*$ which is a closed interval in $[0,T]$.
\end{lemma}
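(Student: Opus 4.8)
The plan is to exploit the identity $\phi^*_t=\psi_t^{-1}\nabla V(C^*)(t)$, which holds because under Assumption \ref{a5.1}(i) we have $\theta_{s,t}=\beta e^{-\beta(s-t)}$ and $\psi_t=e^{-rt}$. Theorem \ref{T2}(ii)--(iii) then read $\phi^*_t\le M$ for all $t$ with equality $dC^*$-a.e., so $M=\max_{[0,T]}\phi^*$, and by Corollary \ref{cor4.5} we have $\supp dC^*\subseteq\arg\max\phi^*=:N$ (nonempty, since $w>0$ forces consumption). Writing $g_s=\exp(\int_0^s\partial_u f(r,Y^*_r,U^*_r)dr)\,\partial_y f(s,Y^*_s,U^*_s)\,\beta e^{-\beta s}$, the function $\phi^*_t=e^{(r+\beta)t}\int_t^T g_s\,ds$ is $C^1$ and satisfies
\[
\frac{d}{dt}\phi^*_t=(r+\beta)\phi^*_t-P_t,\qquad P_t:=\beta e^{rt}\exp\Big(\int_0^t\partial_u f\,dr\Big)\partial_y f(t,Y^*_t,U^*_t).
\]
On any interval carrying no consumption, $dC^*=0$, hence $\dot Y^*_t=-\beta Y^*_t$; there $Y^*$ and $U^*$ (solving $\dot U^*_t=-f(t,Y^*_t,U^*_t)$) are $C^1$, so $P$ is $C^1$ and $\phi^*$ is $C^2$.

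The crux is to show that $P$ is strictly increasing on every no-consumption interval. Taking logarithmic derivatives and using $\dot U^*=-f$,
\[
\partial_y f\cdot\frac{d}{dt}\ln P_t=r\partial_y f+\partial_u f\,\partial_y f+\partial_{ty}f+\partial_y^2 f\,\dot Y^*_t-f\,\partial_{uy}f=\mathfrak{L}f+\partial_y^2 f\,(\dot Y^*_t+\beta Y^*_t).
\]
On a no-consumption interval $\dot Y^*_t+\beta Y^*_t=0$, so the right-hand side is $\mathfrak{L}f(t,Y^*_t,U^*_t)>0$ by Assumption \ref{a5.1}(ii); since $\partial_y f>0$ (from the Inada condition and strict concavity in $y$), this gives $\frac{d}{dt}\ln P_t>0$. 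Consequently, at any interior critical point $t^\circ$ of $\phi^*$ in such an interval, $\frac{d^2}{dt^2}\phi^*_{t^\circ}=(r+\beta)\dot\phi^*_{t^\circ}-\dot P_{t^\circ}=-\dot P_{t^\circ}<0$, so every interior critical point is a strict local maximum. Hence $\phi^*$ admits no interior local minimum on a no-consumption interval, i.e. it is single-peaked there.

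It then remains to assemble the interval structure. Put $t_0=\min\supp dC^*$ and $t_1=\max\supp dC^*$. First I would rule out interior gaps: if $(a',b')\subset(t_0,t_1)$ were a maximal component of the complement of $\supp dC^*$, then $a',b'\in\supp dC^*\subseteq N$, so $\phi^*_{a'}=\phi^*_{b'}=M$; single-peakedness forbids an interior value below $M$, forcing $\phi^*\equiv M$ on $[a',b']$, whence $\dot\phi^*\equiv 0$ and $P\equiv(r+\beta)M$ there, contradicting the strict monotonicity of $P$. Thus $\supp dC^*=[t_0,t_1]$. The same argument on the no-consumption intervals $[0,t_0)$ and $(t_1,T]$ (where $\phi^*_{t_0}=\phi^*_{t_1}=M$ while $\phi^*_T=0<M$) shows $\phi^*<M$ strictly off $[t_0,t_1]$, so that no point outside the support can lie in $N$. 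Since $\phi^*\equiv M$ on $[t_0,t_1]=\supp dC^*\subseteq N$, we conclude $N=\arg\max\phi^*=[t_0,t_1]=\supp dC^*$, a closed interval. The main obstacle is the monotonicity of $P$ — everything rests on the $\mathfrak{L}f$-identity above — together with the a priori possibility that $N$ strictly contains $\supp dC^*$, which is precisely what single-peakedness excludes.
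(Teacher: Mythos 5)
Your proof is correct and rests on the same key computation as the paper's: the identity $\partial_t^2\phi^*_t=(r+\beta)\partial_t\phi^*_t-\beta e^{rt}\exp\left(\int_0^t\partial_u f(r,Y^*_r,U^*_r)dr\right)\mathfrak{L}f(t,Y^*_t,U^*_t)$ on no-consumption intervals (equivalently, your strict monotonicity of $P$), which together with $\mathfrak{L}f>0$ rules out interior local minima of $\phi^*$ and hence gaps in the support. The only difference is that you also carry the argument through for the second half of the claim --- that $\phi^*<M$ strictly off $[t_0,t_1]$, so $\supp dC^*=\arg\max\phi^*$ --- which the paper omits by reference to Lemma 5.6 of \cite{BR00}.
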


\begin{proof}
 First, we show that $\supp dC^*$ is a closed interval. Otherwise, we may find some $s_0<s_1\in \supp dC^*$ satisfying $(s_0,s_1)\cap \,\supp dC^*=\emptyset$. This implies that $\phi^*$ is smooth on $(s_0,s_1)$ with derivatives given by
\begin{equation}\label{derivative}\begin{split}
&\partial_t \phi^*_t=(r+\beta)\phi^*_t-\beta e^{rt} \exp(\int_0^t \partial_u f(r,Y_r^*,U_r^*)dr)\partial_y f(t,Y_t^*,U_t^*),\\
&\partial_t^2 \phi^*_t=(r+\beta)\partial_t\phi_t^*-\beta e^{rt}\exp(\int_0^t \partial_u f(r,Y_r^*,U_r^*)dr)\mathfrak{L}f(t,Y_t^*,U_t^*).
\end{split}\end{equation}
By Corollary \ref{cor4.5}, $s_0,s_1\in \arg\max \phi^*$. It follows that the minimum of $\phi^*$ over $[s_0,s_1]$ is attained at some $\hat{t}\in(s_0,s_1)$, i.e., $\hat{t}$ locally minimizes the smooth function $\phi^*$. Consequently, we have
$$\partial_t \phi^*_{\hat{t}}=0,\  \partial^2_t \phi^*_{\hat{t}}\geq 0.$$
By Eq \eqref{derivative} and noting Assumption \ref{a5.1}, we have
$$\partial_t^2 \phi^*_{\hat{t}}=-\beta e^{r{\hat{t}}}\exp\left(\int_0^{\hat{t}} \partial_u f(r,Y_r^*,U_r^*)dr\right)\mathfrak{L}f(\hat{t},Y_{\hat{t}}^*,U_{\hat{t}}^*)<0,$$
which is a contradiction.

It remains to show that $\supp dC^*=\arg\max \phi^*$. The proof is similar with the one for Lemma 5.6 in \cite{BR00}, so we omit it.
\end{proof}

\end{document}